\newtheorem{Theorem}{Theorem}[section]
\newtheorem{Lemma}{Lemma}[section]
\newtheorem{Remark}{Remark}[section]
\newtheorem{Definition}{Definition}[section]
\newtheorem{Corollary}{Corollary}[section]
\begin{document}

\def\eR{\mathbf{R}}
\def\Rd{{\eR}^d}
\def\Rdd{{\eR}^{d\times d}}
\def\Rdsym{{\eR}^{d\times d}_{sym}}
\def\eN{\mathbf{N}}
\def\eZ{\mathbf{Z}}
\def\IdM{\mathbb{I}_d}
\def\CrlS{\mathcal{S}}
\def\tder{\partial_t}
\def\SymG{\mathbb{D}}
\def\SymGDev{\mathbb{D}^{D}}
\def\dd{\mbox{d}}
\newcommand{\essinf}{\operatorname{ess\,inf}}
\newcommand{\esssup}{\operatorname{ess\,sup}}
\newcommand{\supp}{\operatorname{supp}}
\newcommand{\spn}{\operatorname{span}}
\newcommand\dx{\; \mathrm{d}x}
\newcommand\dy{\; \mathrm{d}y}
\newcommand\dz{\; \mathrm{d}z}
\newcommand\dt{\; \mathrm{d}t}
\newcommand\ds{\; \mathrm{d}s}
\newcommand\diff{\mathrm{d}}
\newcommand\dvr{\mathop{\mathrm{div}}\nolimits}
\newcommand\pat{\partial_t}
\newcommand\sym{\mathrm{sym}}
\newcommand\diam{\mathrm{diam}}
\newcommand\tr{\mathop{\mathrm{tr}}\nolimits}
\newcommand\lspan{\mathop{\mathrm{span}}\nolimits}
\newcommand\dist{\mathrm{dist}}
\title{Singular limit for the compressible Navier--Stokes equations with the hard sphere pressure law on expanding domains}
\author{Martin Kalousek, \v{S}\'arka Ne\v{c}asov\'a}
\maketitle

{\bf The article was inspired by several discussions with our colleague and friend Anton\' \i n Novotn\' y. We never forget him.}

\begin{abstract} 
	The article is devoted to the asymptotic limit of the compressible Navier-Stokes system with a pressure obeying a hard--sphere equation of state on a domain expanding to the whole physical space $\eR^3$. Under the assumptions that acoustic waves generated in the case of ill-prepared data do not reach the boundary of the expanding domain in the given time interval and a certain relation between the Reynolds and Mach numbers and the radius of the expanding domain we prove that the target system is the incompressible Euler system on $\eR^3$. We also provide an estimate of the rate of convergence expressed in terms of characteristic numbers and the radius of domains.
\end{abstract}
\textbf{Keywords:} compressible Navier-Stokes equations, hard--sphere pressure, expanding domain, low Mach number limit, vanishing viscosity limit\\
\textbf{AMS subject classification:} 35Q30, 35Q31, 76N06
\section{Introduction}
Let $T>0$ and $\Omega\subset\eR^d$, $d\in \{2,3\}$ be a bounded domain. We consider the compressible Navier-Stokes system in the time-space cylinder $Q_T=(0,T)\times\Omega$
\begin{equation}\label{NSEq}
\begin{alignedat}{2}
\tder \rho+\dvr(\rho u)=\, &0&&\text{ in }Q_T,\\
\tder(\rho u)+\dvr(\rho u\otimes u-\mathbb{S}(\nabla u))+\nabla p(\rho)=\, &\rho f&&\text{ in }Q_T,\\
\rho(0,\cdot)=\rho_0,\ u(0,\cdot)=\, &u_0&&\text{ in }\Omega,\\
u=\, &0&&\text{ on }(0,T)\times\partial\Omega
\end{alignedat}
\end{equation}
for the unkown density $\rho:Q_T\to\eR$ and the velocity $u:Q_T\to \eR^d$. The structural relation between the pressure $p$ and the density $\rho$ is discussed later. The external forces are denoted by $f$. 
Here, $\mathbb S(\nabla u)$ denotes the Newtonian stress tensor defined as
\begin{equation}
\mathbb S(\nabla u)=\mu^S\left(\frac{1}{2}(\nabla u+(\nabla u)^\top)-\frac{1}{d}\dvr u\mathbb I_d\right)+\mu^B\dvr u\mathbb I_d,
\end{equation}
where $\mu^S>0$ and $\mu^B\geq 0$ are shear and bulk viscosity coefficients and $\mathbb I_d$ stands for the $d\times d$ identity matrix. Furthermore, the velocity gradient $\nabla u$ and the divergence of a $d\times d$--matrix valued function $\dvr \mathbb A$ are defined as 
\begin{equation}
\nabla u=(\partial_{x_j}u_i)_{i,j=1}^d,\ (\dvr\mathbb A)_i=\sum_{j=1}^d\partial_{x_j}\mathbb A_{i,j},\ i=1,\ldots, d.
\end{equation}
Before we give the precise definition of a weak solution to \eqref{NSEq} we collect hypotheses on the pressure. The relation between the pressure $p$ and the density $\rho$ of the fluid is so--called hard--sphere equation of state in the interval $[0,\bar\rho)$ 
\begin{equation}\label{PressAssump}
p\in C^1([0,\bar\rho)),\ p(0)=0,\ p'>0\text{ on }(0,\bar\rho),\ \lim_{s\to\bar\rho_-}p(s)=+\infty.
\end{equation}
We also define the pressure potential $P\in C^1([0,\bar\rho))$ as 
\begin{equation}\label{PresPot}
P(s)=s\int_{\frac{\bar\rho}{2}}^s\frac{p(z)}{z^2}
\end{equation}
and note that
\begin{equation}\label{PressPotId}
P'(s)s-P(s)=p(s),\ P''(s)=\frac{p'(s)}{s}\text{ for }s\in[0,\bar\rho).
\end{equation}

We are interested in  the well-accepted Carnahan-Starling equation of the state characterized by the properties in \eqref{PressAssump} and \eqref{PressConstr}. As explained in e.g. \cite{Song}, it is a suitable approximate equation of state for the fluid phase of the hard--sphere model. The derivation of such model was performed from a quadratic relation between the integer portions of the virial coefficients and their orders. This model is used for the study of the behavior of dense gases and liquids.  The interested reader can find more details regarding the model or its corrections (Percus-Yevick equation, Kolafa correction, Liu correction) in \cite{Carnahan-Starling, Hongqin,KLM04AEHS,KVB62CPES}.
Singular pressure laws of similar type appeared in modeling of various phenomena, including the collective motion, see \cite{MR3020033, MR2835410, maury2012}, and the traffic flow, see \cite{MR2438216, MR2366138}. Assymptotic limits for problems involving a singular pressure law were studied in  \cite{bresch2014, bresch2017, peza2015,
MR3974475}.
	  
	The study of existence of weak solutions to the compressible Navier--Stokes equations in the isentropic setting on a bounded domain goes back to the 
seminal work by Lions \cite{MR1637634} and the later improvement by Feireisl et al. \cite{MR1867887}. Concerning systems with a singular pressure law in a bounded domain with no--slip boundary conditions, the existence of weak solutions was shown by Feireisl et al. \cite{FeLuMa16} and Feireisl and Zhang \cite[Section 3]{FeZh10}. Recently, the existence of weak solutions to compressible Navier--Stokes equations with the hard--sphere pressure was investigated by Choe et al. \cite{MR3912678} for the case with a general inflow/outflow and in an exterior domain by Ne\v{c}asov\'a et al. \cite{NeNoRo}. Weak--strong uniqueness for the compressible Navier--Stokes equations with the hard pressure in periodic spatial domains was shown by Feireisl et al. \cite{FeLuNo18}.

This paper is motivated by the result in \cite{FeNeSu14} concerning the assymptotic limits for the compressible Navier--Stokes system in the isentropic setting on an expanding domain with ill--prepared initial data. Our second motivation comes from \cite{FeLuNo18}, where the modification of the relative entropy inequality, originaly derived for the isentropic regime by Feireisl et al. \cite{FeJiNo12}, was derived in the case of periodic boundary conditions. The aim of this paper is twofold. First, we want to derive of the relative entropy inequality for the Navier--Stokes problem in the setting with the hard--sphere pressure and no--slip boundary conditions.  Second, we want to study the asymptotic limit of the compressible Navier-Stokes system with the pressure obeying a hard--sphere equation of state on a domain expanding to the whole physical space $\eR^3$. 

The outline of the paper is as follows. Section \ref{intro} deals with the description of the problem, the meaning of weak solution to the problem, the statement of the main result of the paper and the derivation of the relative entropy inequality. 
Section \ref{sing} is devoted to the study of the assymptotic limit in the primitive compressible Navier--Stokes problem \eqref{NSEq} yielding the Euler incompressible equations in the whole physical space $\eR^3$ as the target system. Finally, in the Appendix we deal with renormalized solutions of the continuity equation adopted for a function $b$ satisfying \eqref{b} and \eqref{BNondec}.

\section{Definition of weak solutions and preliminaries}\label{intro}

We introduce the  definition of weak solutions as was done in \cite{FeLuNo18}.

\begin{Definition}\label{WSDef}
Let the following hypotheses be imposed on the initial data
\begin{equation}\label{InitDAssum}
\rho_0\in[0,\bar\rho)\text{ a.e. in }\Omega,\ \int_\Omega P(\rho_0)<\infty,\ \int_\Omega \rho_0|u_0|^2<\infty.
\end{equation}
A pair $(\rho,u)$ is said to be a finite--energy weak solution to \eqref{NSEq} if
\begin{itemize}
	\item $\rho\in [0,\bar{\rho})$ a.e. in $Q_T$, $\rho\in C_w([0,T];L^\gamma(\Omega))$ for any $\gamma>1$, $p(\rho)\in L^1(Q_T)$,
	\item  $u\in L^2(0,T;W^{1,2}_0(\Omega)^d)$, $\rho u\in C_w([0,T];L^2(\Omega)^d)$, $\rho|u|^2\in L^\infty(0,T;L^1(\Omega))$. 
	\item 
	The continuity equation 
	\begin{equation}
	\int_0^\tau\int_\Omega \rho\tder\phi+\rho u\cdot\nabla \phi=\int_\Omega \rho(\tau,\cdot)\phi(\tau,\cdot)-\int_\Omega\rho_0\phi(0,\cdot)
	\end{equation}
	is satisfied for any $\tau\in (0,T)$ and any test function $\phi\in C^1([0,T];C^1(\overline{\Omega}))$.
	\item The momentum equation 
	\begin{equation}\label{MomEqWeak}
	\begin{split}
	&\int_0^\tau\int_\Omega \rho u\tder\varphi+\left(\rho u\otimes u-\mathbb S(\nabla u)\right)\cdot\nabla \varphi+p(\rho)\dvr\varphi\\
	&=-\int_0^\tau\int_\Omega \rho f\cdot\varphi +\int_\Omega \rho(\tau,\cdot)\phi(\tau,\cdot)-\int_\Omega\rho_0\phi(0,\cdot)
	\end{split}
	\end{equation}
	is satisfied for any $\tau\in (0,T)$ and any test function $\varphi\in C^1_c([0,T]\times\Omega)^d$.
	\item 
	The continuity equation holds in the sense of renormalized solutions
	\begin{equation}
	\int_0^T\int_\Omega b(\rho)\tder\psi+b(\rho) u\cdot\nabla \psi+(b'(\rho)\rho-b(\rho))\dvr u\psi=0
	\end{equation}
	for any test function $\psi\in C^\infty_c(Q_T)$ and any function $b\in C^1([0,\bar\rho))$ satisfying
	\begin{equation}\label{b}
	|b'(s)|^2+|b(s)|^2\leq c(1+p(s))\text{ for some }c>0\text{ and any }s\in[0,\bar{\rho}).
	\end{equation}
	\item The energy inequality holds for a.a. $\tau\in(0,T)$:
	\begin{equation}
	\begin{split}
	\int_\Omega &\left(\frac{1}{2}\rho|u|^2+P(\rho)\right)(\tau)+\int_0^\tau\int_\Omega \mathbb S(\nabla u)\cdot\nabla u\\
	&\leq \int_\Omega \left(\frac{1}{2}\rho_0|u_0|^2+P(\rho_0)\right)+\int_0^\tau\int_\Omega \rho f\cdot u.
	\end{split}
	\end{equation}
\end{itemize}
\end{Definition}
\begin{Remark}
	If the class of admissible test functions in \eqref{MomEqWeak} is reduced to $C^\infty_c(Q_T)^3$ one can conclude that $\nabla p(\rho)\in X^*$, where $X=L^\frac{5}{2}(0,T;W_0^{1,\frac{5}{2}}(\Omega))\cap W^{1,2}(0,T;L^\frac{6}{5}(\Omega))$. Indeed, the regularity of $\rho$, $u$ and $\rho u$ specified in Definition \ref{WSDef} and 
	\begin{equation*}
	\rho u\otimes u\in L^\infty(0,T;L^1(\Omega))\cap L^1(0,T;L^3(\Omega))\subset L^\frac{5}{3}(Q_T),
	\end{equation*}
	where also the embedding $W^{1,2}(\Omega)$ into $L^6(\Omega)$\footnote{In fact for $\Omega\subset\eR^2$, $W^{1,2}(\Omega)$ is embedded into $L^q(\Omega)$ for any $q\in[1,\infty)$ but the better integrability will not bring any benefits in further analysis. For the sake of clarity we will not distinguish between the $2d$ and $3d$ case.} was applied, imply the regularity of the distribution $\nabla p$ provided that \eqref{MomEqWeak} is used for the expression of the duality $\int_0^T\langle \nabla p, \varphi\rangle$. Using the regularity of $\rho$, $u$, $\rho u$, $\rho u\otimes u$ and $\nabla p(\rho)$ and a density argument one can alternatively formulate the momentum equation \begin{equation}\label{MomEqWeakNablaP}
	\begin{split}
	&\int_0^\tau\int_\Omega \rho u\tder\varphi+\left(\rho u\otimes u-\mathbb S(\nabla u)\right)\cdot\nabla \varphi-\int_0^\tau \langle\nabla p(\rho),\varphi\rangle\\
	&=-\int_0^\tau\int_\Omega \rho f\cdot\varphi +\int_\Omega \rho(\tau,\cdot)\varphi(\tau,\cdot)-\int_\Omega\rho_0\varphi(0,\cdot)
	\end{split}
	\end{equation}
	for any $\tau\in (0,T)$ and any test function $\varphi\in X$.
\end{Remark}

For the purposes of this paper we define the relative entropy functional as
\begin{equation}
\mathcal E(\rho, u|r,U)(t)=\int_\Omega\left(\frac{1}{2}\rho|u-U|^2+P(\rho)-P(r)-P'(r)(\rho-r)\right)(t,x)\dx.
\end{equation}
The ensuing theorem deals with the global in time existence of a finite--energy weak solution to \eqref{NSEq} that satisfies a version of so called relative entropy inequality.
\begin{Theorem}\label{Thm:Main}
	Suppose $T>0$ and $\Omega\subset\eR^d$ $d=2,3$ be a bounded domain with $C^{2,\nu}$--boundary for some $\nu>0$. Let the pressure functional satisfies besides \eqref{PressAssump} also the constraint 
	\begin{equation}\label{PressConstr}
	\lim_{\rho\to\bar\rho_-}p(\rho)(\bar\rho-\rho)^\beta>0\text{ for some }\beta>\frac{5}{2}
	\end{equation}
	and the initial data satisfy \eqref{InitDAssum}.
	Let $b\in C^1([0,\bar\rho))$ be a nonnegative function such that
	\begin{equation}\label{BNondec}
	\begin{split}
	b,b'\text{ are nondecreasing on }[\bar\rho-\alpha_0,\bar\rho)\text{ for some }\alpha_0\in(0,\bar\rho),\\
	|b'|^\frac{5}{2}+|b|^\frac{5}{2}\leq c(1+p)\text{ on }[0,\bar\rho)\text{ for some }c>0.
	\end{split}
	\end{equation}
	Then there exists a finite--energy weak solution $(\rho,u)$ to \eqref{NSEq} in the sense of Definition \ref{WSDef}. Moreover, if $(r,U)\in C^1(\overline {Q_T})\times C^1([0,T];C^2(\overline\Omega)^d)$ with $U=0$ on $(0,T)\times\partial\Omega$ satisfies
	\begin{equation*}
		0<\inf_{\overline{Q_T}}r\leq \sup_{\overline{Q_T}}r<\bar\rho
	\end{equation*}
	then the relative entropy inequality 
	\begin{equation*}
	\begin{split}
		&\mathcal E(\rho,u|r,U)+\int_0^\tau\int_\Omega\left(\mathbb S(\nabla u)-\mathbb S(\nabla U)\right)\cdot\nabla (u-U)+\int_0^\tau\int_\Omega p(\rho)b(\rho)\\
		&\leq \mathcal E(\rho_0,u_0|r(0,\cdot),U(0,\cdot))+\int_0^\tau\mathcal R_1(t)+\mathcal R_2(t)\dt+\mathcal{R}_3(\tau)
	\end{split}
	\end{equation*}
	holds for a.a. $\tau\in(0,T)$. The remainder terms $\mathcal R_i$ $i=1,2,3$ read
	\begin{equation*}
		\begin{split}
		\mathcal R_1(t)=&	\int_\Omega \rho\left(\tder U+(u\cdot\nabla) U\right)\cdot(U-u)+\int_\Omega \mathbb S(\nabla U)\cdot \nabla(U-u)+\int_\Omega \rho f\cdot(u-U)\\
		&+\int_\Omega \left((r-\rho)\tder P'(r)+(rU-\rho u)\cdot \nabla P'(r)\right)+\int_\Omega \dvr U\left(p(r)-p(\rho)\right),\\
		\mathcal R_2(t)=& \int_\Omega p(\rho)\langle b(\rho)\rangle-\int_\Omega \rho u\otimes u\cdot\nabla \mathcal B\left(b(\rho)-\langle b(\rho)\rangle\right)+\int_\Omega \mathbb{S}(\nabla u)\cdot\nabla \mathcal B\left(b(\rho)-\langle b(\rho)\rangle\right)\\
		&-\int_\Omega\rho f\cdot \mathcal B\left(b(\rho)-\langle b(\rho)\rangle\right)+\int_\Omega \rho u\cdot\mathcal B\left(\dvr(b(\rho)u)-\langle \dvr(b(\rho)u \rangle \right)\\
		&+\int_\Omega\rho u\cdot \mathcal B\left((b'(\rho)\rho-b(\rho))\dvr u-\langle (b'(\rho)\rho-b(\rho))\dvr u\rangle\right)\\
		\mathcal R_3(\tau)=&\int_\Omega \rho u\cdot\mathcal B\left(b(\rho)-\langle b(\rho)\rangle\right)(\tau,\cdot)-\int_\Omega \rho_0u_0\cdot \mathcal B\left(b(\rho_0)-\langle b(\rho_0)\rangle\right).
		\end{split}
	\end{equation*}
\end{Theorem}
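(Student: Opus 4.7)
The proof splits into two parts: constructing a finite--energy weak solution and deriving the relative entropy inequality with its specific remainder terms. For existence, I would follow the Feireisl--Lu--Novotn\'y scheme adapted to the no--slip setting. Regularize the singular pressure $p$ by a family $p_\varepsilon$ coinciding with $p$ on $[0,\bar\rho-\varepsilon]$ and extended polynomially of order $\beta$ to $[\bar\rho-\varepsilon,\infty)$, apply the Lions--Feireisl theory on $\Omega$ for each $\varepsilon>0$, and pass $\varepsilon\to 0$. The energy inequality gives uniform bounds on $\rho_\varepsilon|u_\varepsilon|^2$ and $P_\varepsilon(\rho_\varepsilon)$ in $L^\infty_tL^1_x$ and on $\nabla u_\varepsilon$ in $L^2_{t,x}$; the Bogovskii--type test $\varphi=\mathcal{B}(\rho_\varepsilon^\theta-\langle\rho_\varepsilon^\theta\rangle)$ together with $\beta>5/2$ in \eqref{PressConstr} yields higher integrability, hence equiintegrability, of the pressure. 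The main tools for the limit passage are the effective viscous flux identity, the renormalized continuity equation (extended in the Appendix to the singular $b$ admitted by \eqref{b} and \eqref{BNondec}), and Feireisl's oscillation defect measure argument; the bound $\rho<\bar\rho$ a.e.\ is then recovered by renormalizing with a logarithmic--type $b$.

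\textbf{Basic relative entropy.} With the weak solution in hand, the standard Dafermos--Feireisl procedure yields the relative entropy inequality with remainder $\mathcal{R}_1$. Combine the energy inequality with (i) the momentum equation \eqref{MomEqWeak} tested against $-U$, (ii) the continuity equation tested against $\tfrac12|U|^2$, and (iii) the continuity equation tested against $-P'(r)$, together with the chain rule applied to the smooth $r$. The potential identity \eqref{PressPotId} reorganizes the left side into $\mathcal{E}(\rho,u|r,U)+\int_0^\tau\int_\Omega(\mathbb{S}(\nabla u)-\mathbb{S}(\nabla U))\cdot\nabla(u-U)$, while the right side collapses to $\mathcal{E}(\rho_0,u_0|r(0,\cdot),U(0,\cdot))+\int_0^\tau\mathcal{R}_1$.

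\textbf{Bogovskii trick for $p(\rho)b(\rho)$.} To add the gain $\int_0^\tau\int_\Omega p(\rho)b(\rho)$ to the left side, I would test the alternative momentum formulation \eqref{MomEqWeakNablaP} against $\varphi=\mathcal{B}(b(\rho)-\langle b(\rho)\rangle)$, where $\mathcal{B}$ is the Bogovskii operator solving $\dvr\varphi=b(\rho)-\langle b(\rho)\rangle$ with zero trace. The duality pairing with $\nabla p(\rho)$ produces $-\int p(\rho)b(\rho)+\int p(\rho)\langle b(\rho)\rangle$; moving the first integral to the left, the remaining spatial contributions (convection, viscosity, body force) assemble into $\mathcal{R}_2$, and the time--boundary contributions from $\tder\varphi$ give $\mathcal{R}_3$. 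Admissibility of $\varphi$ in the space $X$ from the Remark rests on the growth $|b|^{5/2}\le c(1+p)$ in \eqref{BNondec} (spatial regularity $L^{5/2}_tW^{1,5/2}_x$ via Bogovskii's estimate) and on the renormalized continuity equation $\tder b(\rho)=-\dvr(b(\rho)u)-(b'(\rho)\rho-b(\rho))\dvr u$, which after applying $\mathcal{B}$ delivers the time regularity $W^{1,2}_tL^{6/5}_x$ and simultaneously explains the last two integrals in $\mathcal{R}_2$.

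\textbf{Main obstacle.} The principal technical hurdle is justifying the renormalized continuity equation for the singular $b$ in \eqref{BNondec}, since the classical DiPerna--Lions framework requires $b\in C^1_b([0,\bar\rho])$. The remedy, to be carried out in the Appendix, is to truncate $b$ at level $\bar\rho-\alpha$, apply the standard renormalization, and pass $\alpha\to 0$ using the monotonicity of $b$ and $b'$ near $\bar\rho$ together with the strict bound $\rho<\bar\rho$ from the existence step. Once this is available, deriving $\mathcal{R}_2$ and $\mathcal{R}_3$ amounts to careful bookkeeping: every occurrence of $\tder\varphi$ is expanded through the renormalized equation, which is precisely what produces the inner quantities $\dvr(b(\rho)u)$ and $(b'(\rho)\rho-b(\rho))\dvr u$ sitting under the outer $\mathcal{B}$ in $\mathcal{R}_2$.
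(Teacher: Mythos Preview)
Your overall architecture is right, and the existence sketch and the derivation of $\mathcal R_1$ match the paper. The gap is in the Bogovskii step: your claim that $\varphi=\mathcal B\bigl(b(\rho)-\langle b(\rho)\rangle\bigr)$ lies in $X=L^{5/2}_tW^{1,5/2}_x\cap W^{1,2}_tL^{6/5}_x$ is false, and this is precisely the obstruction the paper has to work around. From \eqref{BNondec} you only get $b'(\rho)\in L^{5/2}(Q_T)$ (and $L^\infty_tL^{3/2}_x$), while $\dvr u\in L^2(Q_T)$; hence $(b'(\rho)\rho-b(\rho))\dvr u$ is merely in $L^{10/9}(Q_T)$, and $\mathcal B$ maps this into $L^{10/9}\bigl(0,T;W^{1,10/9}_0(\Omega)\bigr)$. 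That is not contained in $L^2\bigl(0,T;L^{6/5}(\Omega)\bigr)$ because the time exponent $10/9<2$, so $\varphi\notin X$ and you cannot invoke \eqref{MomEqWeakNablaP} directly.

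The paper's remedy is to run the truncation one level higher than you propose: instead of truncating only to justify the renormalized continuity equation, it uses the truncated test function $\varphi_\alpha=\mathcal B\bigl(b_\alpha(\rho)-\langle b_\alpha(\rho)\rangle\bigr)$ in the momentum equation, where $b_\alpha$ coincides with $b$ on $[0,\bar\rho-\alpha]$ and is constant beyond. Now $b_\alpha(\rho),b'_\alpha(\rho)\in L^\infty(Q_T)$, the renormalized equation \eqref{BAlphaRCEq} gives $\partial_t\varphi_\alpha\in L^2_tL^6_x$, and $\varphi_\alpha\in X$ is admissible. This yields the identity \eqref{AlphaIdent} with $b_\alpha$ in place of $b$; one then passes $\alpha\to 0_+$ term by term using $b_\alpha(\rho)\to b(\rho)$ in $L^{5/2}(Q_T)$, the corresponding Bogovskii convergences, and monotone convergence for $\int p(\rho)b_\alpha(\rho)\to\int p(\rho)b(\rho)$ (here the monotonicity of $b$ near $\bar\rho$ from \eqref{BNondec} is what makes $b_{\alpha_1}\le b_{\alpha_2}$ for $\alpha_1\ge\alpha_2$). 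So your ``main obstacle'' paragraph has the right tool but aims it at the wrong target: the renormalized equation for the singular $b$ is in fact available (Appendix, first assertion), and the truncation is needed not there but to make the Bogovskii test function admissible before the limit passage.
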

	We note that $\mathcal B$ stands for the Bogovskii operator and the notation $\langle g\rangle$=$\langle g,1\rangle$ is used whenever $g$ belongs to a dual space to a Banach space containig the element $1$. In particular, if $g\in L^p(\Omega)$ then $\langle g\rangle=\tfrac{1}{|\Omega|}\int_\Omega g$.
\begin{proof}[Proof of Theorem \ref{Thm:Main}]
The existence of a global in time weak solution to \eqref{NSEq} in the sense of Definition \ref{WSDef} can be shown by employing the standard approximation scheme for the compressible Navier-Stokes system. One adopts the regularization of the pressure from \cite{FeZh10}. The existence proof in the latter reference relies on a constraint \eqref{PressConstr} for $\beta\geq 3$. Later it turned out that the exponent $\beta>\frac{5}{2}$ in \eqref{PressConstr} is sufficient for the existence proof, cf. \cite{FeLuMa16}.

Therefore we now concentrate on proving the relative entropy inequality.
Following the arguments employed in \cite{FeJiNo12} we obtain that
\begin{equation}\label{1STPEI}
\mathcal E(\rho, u|r,U)(\tau)+\int_0^\tau\int_\Omega \mathbb S\left(\nabla (u-U)\right)\cdot\nabla (u-U)\leq \mathcal E(\rho_0, u_0|r(0,\cdot),U(0,\cdot))+ \int_0^\tau\mathcal{R}_1.
\end{equation}
Moreover, it was derived in \cite{FeLuNo18} that 
\begin{equation}
|b(s)|+|b'(s)|\leq c(1+p(s))^\frac{2}{5}\leq c(1+P(s))^\frac{2}{3},
\end{equation}
provided that \eqref{PresPot} and \eqref{PressConstr} are taken into account. The latter inequlities in combination with \eqref{InitDAssum} and \eqref{BNondec}$_2$ yield
\begin{equation}\label{BRhoInt}
\begin{split}
b(\rho_0), b'(\rho_0)&\in L^\frac{3}{2}(\Omega)\\
b(\rho), b'(\rho)&\in L^\infty(0,T;L^\frac{3}{2}(\Omega))\cap L^\frac{5}{2}(Q_T).
\end{split}
\end{equation}
Having \eqref{1STPEI} at hand we are left with the proof of 
\begin{equation}\label{PressRenId}
\int_0^\tau\int_\Omega p(\rho)b(\rho)=\int_0^\tau\mathcal R_2(t)\dt+\mathcal R_3(\tau).
\end{equation}
The idea of proving this identity is to employ $\mathcal B(b(\rho)-\langle b(\rho)\rangle)$ as a test function in the momentum equation. Unfortunately, the low regularity of the time derivative of the latter function excludes this possibility. Indeed, expressing the time derivative of $\mathcal B(b(\rho)-\langle b(\rho)\rangle)$ in terms of the renormalized continuity equation with the function $b$ we obtain
\begin{equation*}
\begin{split}
\tder \mathcal B(b(\rho)-\langle b(\rho)\rangle)=&\mathcal B\left(\tder \left(b(\rho)-\langle b(\rho)\rangle\right)\right)=\mathcal B\left(\dvr(b(\rho)u)-\langle\dvr(b(\rho)u)\rangle\right)\\
&+\mathcal B\left((b'(\rho)\rho-b(\rho))\dvr u-\langle (b'(\rho)\rho-b(\rho))\dvr u\rangle\right).
\end{split}
\end{equation*}
Taking into consideration $b'(\rho)\in L^\frac{5}{2}(Q_T)$, $\dvr u\in L^2(Q_T)$ and the continuity of $\mathcal B$ from $L^{\frac{10}{9}}(\Omega)\to W^{1,\frac{10}{9}}_0(\Omega)^d$ it follows that the last term in the latter identity belongs to $L^\frac{10}{9}(0,T;W^{1,\frac{10}{9}}_0(\Omega)^d)$, which does not imply the regularity required for the time derivative of a test function in \eqref{MomEqWeak}. In order to circumvent this obstacle, we consider a suitable regularization of the function $b$. Namely,
we define for $\alpha\in(0,\alpha_0)$ with $\alpha_0$ from \eqref{BNondec} the regularization $b_\alpha$ of $b$ as 
\begin{equation}\label{BRegul}
b_\alpha(s)=\begin{cases}
b(s)&s\in[0,\bar\rho-\alpha]\\
b(\bar\rho-\alpha)&s\in(\bar\rho-\alpha,\bar\rho).
\end{cases}
\end{equation} 
Next, considering the function $\varphi=\mathcal B(b_\alpha(\rho)-\langle b_\alpha(\rho)\rangle)$ we immediately deduce that $\varphi\in L^\infty(0,T;W^{1,q}_0(\Omega)^d)$ for any $q\in[1,\infty)$, cf. Lemma \ref{Lem:Bog}. Moreover, using the linearity of $\mathcal B$ we get 
\begin{equation*}
	\tder \varphi=\mathcal B(\tder b_\alpha(\rho)-\langle \tder b_\alpha(\rho)\rangle).
\end{equation*}
Employing \eqref{BAlphaRCEq} we have
\begin{equation}\label{TderPhi}
\tder \varphi=\mathcal B\bigl(\dvr(b_\alpha(\rho)u)+(b'_\alpha(\rho)\rho-b_\alpha(\rho))\dvr u-\langle (b'_\alpha(\rho)\rho-b_\alpha(\rho))\dvr u\rangle\bigr).
\end{equation}
We notice that $\langle\dvr(b_\alpha(\rho)u)\rangle=0$ provided $\dvr(b_\alpha(\rho)u)$ is understood as an element of $L^2(0,T;(W^{1,\frac{6}{5}}(\Omega))')$. To show this fact we consider a sequence $\{S_\varepsilon(b_\alpha(\rho))u\}$, where $S_\varepsilon$ is a mollifier with respect to the space variables. Applying properties of mollifiers, the facts that $b_\alpha(\rho)\in L^\infty(Q_T)$, $u\in L^2(0,T;L^6(\Omega))$ and the Lebesgue dominated convergence theorem it follows that 
\begin{equation}\label{MollBACnv}
S_\varepsilon(b_\alpha (\rho))u\to b_\alpha (\rho)u\text{ in }L^2(0,T;L^6(\Omega)).
\end{equation}
 Moreover, as $S_\varepsilon (b_\alpha(\rho))u$ posesses the vanishing trace on $\partial\Omega$, which is not clear for $b_\alpha(\rho)u$, there is a representation for the duality 
\begin{equation*}
\int_0^T\langle\dvr(S
_\varepsilon(b_\alpha(\rho))u),\phi\rangle_{(W^{1,\frac{6}{5}}(\Omega))'\times W^{1,\frac{6}{5}}(\Omega)}=-\int_0^T\int_\Omega S
_\varepsilon(b_\alpha(\rho))u\cdot\nabla\phi\text{ for any }\phi\in L^2(0,T;W^{1,
	\frac{6}{5}}(\Omega)).
\end{equation*}
Accordingly, by \eqref{MollBACnv} we conclude that $\{\dvr(S_\varepsilon(b_\alpha(\rho)u))\}$ is a Cauchy sequence in $L^2(0,T;(W^{1,\frac{6}{5}}(\Omega))')$ implying $\dvr(b_\alpha(\rho)u)\in L^2(0,T;(W^{1,\frac{6}{5}}(\Omega))')$ and the representation
\begin{equation}\label{FunctRepForm}
\int_0^T\langle\dvr(b_\alpha(\rho)u),\phi\rangle_{(W^{1,\frac{6}{5}}(\Omega))'\times W^{1,\frac{6}{5}}(\Omega)}=-\int_0^T\int_\Omega b_\alpha(\rho)u\cdot\nabla\phi\text{ for any }\phi\in L^2(0,T;W^{1,\frac{6}{5}}(\Omega)).
\end{equation}
The suitable choice of an arbitrary $\phi$ independent of the space variable yields $\langle\dvr(b_\alpha(\rho)u)\rangle=0$ a.e. in $(0,T)$.

Taking into account $b_\alpha(\rho),b'_\alpha(\rho),\rho\in L^\infty(Q_T)$ and $u\in L^2(0,T;L^6(\Omega))$ it follows that
\begin{equation*} 
\dvr(b_\alpha(\rho)u)+(b'_\alpha(\rho)\rho-b_\alpha(\rho))\dvr u-\langle (b'_\alpha(\rho)\rho-b_\alpha(\rho))\dvr u\rangle\in L^2(0,T; (W^{1,\frac{6}{5}}(\Omega))').
\end{equation*}
 Hence using Lemma \ref{Lem:Bog} it follows that $\tder\varphi\in L^2(0,T;L^6(\Omega)^d)$. Consequently, we obtain that $\varphi$ is an admissible test function in \eqref{MomEqWeakNablaP}. Employing $\varphi$ as a test function in \eqref{MomEqWeakNablaP} we infer
\begin{equation}\label{AlphaIdent}
	\int_0^\tau\int_\Omega p(\rho)b_\alpha(\rho)-I^\alpha_1=-\int_0^\tau\langle\nabla p(\rho), \mathcal{B}(b_\alpha (\rho)-\langle b_\alpha(\rho)\rangle\rangle =\sum_{j=2}^6 I^{\alpha}_{j}+J^{\alpha}(\tau),
\end{equation}
where
\begin{equation}
	\begin{split}
	I_1^\alpha=&\int_0^\tau\int_\Omega p(\rho)\langle b_\alpha(\rho)\rangle\\
	I^{\alpha}_{2}=&-\int_0^\tau\int_\Omega \rho u\otimes u\cdot\nabla \mathcal B(b_\alpha(\rho)-\langle b_\alpha(\rho)\rangle)\\
	I^{\alpha}_{3}=&\int_0^\tau\int_\Omega\mathbb S(\nabla u)\cdot \nabla\mathcal B(b_\alpha(\rho)-\langle b_\alpha(\rho)\rangle)\\
	I^{\alpha}_4=&-\int_0^\tau\int_\Omega \rho f\cdot\mathcal B(b_\alpha(\rho)-\langle b_\alpha(\rho)\rangle)\\
	I^{\alpha}_5=&\int_0^\tau\int_\Omega \rho u\cdot\mathcal B(\dvr(b_\alpha(\rho)u))\\
	I^{\alpha}_6=&\int_0^\tau\int_\Omega \rho u\cdot\mathcal B((b'_\alpha(\rho)\rho-b_\alpha(\rho))\dvr u-\langle (b'_\alpha(\rho)\rho-b_\alpha(\rho))\dvr u\rangle)\\
	J^{\alpha}(\tau)=&\int_\Omega \rho u\cdot\mathcal B(b_\alpha(\rho)-\langle b_\alpha(\rho)\rangle)(\tau,\cdot)-\int_\Omega \rho_0 u_0\cdot\mathcal B(b_\alpha(\rho_0)-\langle b_\alpha(\rho_0)\rangle).
	\end{split}
\end{equation}
The next task is the limit passage $\alpha\to 0_+$ in \eqref{AlphaIdent}. 
To this end, we use the following convergences as $\alpha\to 0_+$
\begin{equation}\label{BAlphaCnv}
\begin{alignedat}{2}
	b_\alpha (\rho)&\to b(\rho)&&\text{ in }L^\frac{5}{2}(Q_T)\text{ and a.e. in }Q_T,\\
	b'_\alpha(\rho)&\to b'(\rho)&&\text{ in }L^\frac{5}{2}(Q_T),\\
	b_\alpha(\rho_0)&\to b(\rho)&&\text{ in }L^\frac{3}{2}(\Omega),\\
		b'_\alpha(\rho_0)&\to b'(\rho)&&\text{ in }L^\frac{3}{2}(\Omega).
	\end{alignedat}
\end{equation} 
We note that the latter convergences follow by the definition of $b_\alpha$ in \eqref{BRegul}, the assumption that $b\in C^1([0,\bar\rho))$, \eqref{BRhoInt} and the Lebesgue dominated convergence theorem. Next, taking into account that $u\in L^2(0,T;L^6(\Omega)^d)$ we get as $\alpha\to 0_+$
\begin{equation}\label{BAlphaBPrAlphaCnv}
\begin{alignedat}{2}
b_\alpha(\rho)u&\to b(\rho)u&&\text{ in }L^\frac{10}{9}(0,T;L^\frac{30}{17}(\Omega)^d),\\
\dvr(b_\alpha(\rho)u)&\to\dvr(b(\rho)u)&&\text{ in }L^{10}(0,T; (W^{1,\frac{30}{13}}(\Omega))'),\\
\left(b'_\alpha(\rho)\rho-b_\alpha(\rho)\right)\dvr u&\to \left(b'(\rho)\rho-b(\rho)\right)\dvr u&&\text{ in }L^\frac{10}{9}(Q_T).
\end{alignedat}
\end{equation} 
Let us point out that \eqref{BAlphaBPrAlphaCnv}$_2$ follows from \eqref{BAlphaBPrAlphaCnv}$_1$ by repeating the procedure, which leads to \eqref{FunctRepForm}.
Moreover, by Lemma \ref{Lem:Bog} we conclude
\begin{equation}\label{BogCnv}
\begin{alignedat}{2}
\mathcal B([b_\alpha(\rho)]_0)&\to \mathcal B([b(\rho)]_0)&&\text{ in }L^\frac{5}{2}(0,T;W^{1,\frac{5}{2}}_0(\Omega)^d),\\
\mathcal B(\dvr (b_\alpha(\rho)u))&\to \mathcal B(\dvr(b(\rho)u))&&\text{ in }L^\frac{10}{9}(0,T;L^\frac{30}{17}(\Omega)^d),\\
\mathcal B\left([\left(b'_\alpha(\rho)\rho-b_\alpha(\rho)\right)\dvr u]_0\right)&\to \mathcal B\left([\left(b'(\rho)\rho-b(\rho)\right)\dvr u]_0\right)&&\text{ in }L^\frac{10}{9}(0,T;W^{1,\frac{10}{9}}(\Omega)^d),
\end{alignedat}
\end{equation}
where the notation $[g]_0=g-\langle g\rangle$ was used.
By \eqref{BAlphaCnv}$_1$ and the properties of $b$ that allow to consider $|\Omega|^{-1}p(\rho)\|b(\rho)\|_{L^\infty(0,T;L^1(\Omega))}$ as an integrable majorant for $p(\rho)\langle b_\alpha(\rho)\rangle$ we employ the Lebesgue dominated convergence theorem for the limit passage $\alpha\to 0_+$ in $I^\alpha_1$.
We use the convergence \eqref{BogCnv}$_1$ for the passage to the limit $\alpha\to 0_+$ in $I^\alpha_2$, $I^\alpha_3$, $I^\alpha_4$. Moreover, this convergence implies $\mathcal B(b_\alpha(\rho)-\langle b_\alpha(\rho)\rangle)(\tau)$ in $W^{1,\frac{5}{2}}_0(\Omega)^d$ for a.a. $\tau\in(0,T)$ allowing for the passage $\alpha\to 0_+$ in $J^\alpha(\tau)$. One also applies \eqref{BAlphaCnv}$_{3,4}$ in this limit passage. As $\rho u\in L^{10}(0,T;L^\frac{30}{13}(\Omega)^d)\subset L^\infty(0,T;L^2(\Omega)^d)\cap L^2(0,T;L^6(\Omega)^d)$ it follows that \eqref{BogCnv}$_2$ allows us to pass to the limit $\alpha\to 0_+$ in $I^\alpha_5$. Applying \eqref{BogCnv}$_3$, the Sobolev embedding, the fact that $\rho\in L^\infty(Q_T)$, $u\in L^\infty(0,T;L^2(\Omega)^d)$ we pass to the limit $\alpha\to 0_+$ in $I^\alpha_6$. Therefore \eqref{AlphaIdent} yields
\begin{equation*}
\begin{split}
	\int_0^\tau\int_\Omega p(\rho)b(\rho)=&\lim_{\alpha\to 0_+}\int_0^\tau\int_\Omega p(\rho)b_\alpha(\rho)=\lim_{\alpha\to 0_+}\left(\sum_{j=1}^6 I^\alpha_j+J^\alpha(\tau)\right)\\
	=&	\int_0^\tau\int_\Omega p(\rho)\langle b(\rho)\rangle-\int_0^\tau\int_\Omega \rho u\otimes u\cdot\nabla \mathcal B(b(\rho)-\langle b(\rho)\rangle)
	\\&+\int_0^\tau\int_\Omega\mathbb S(\nabla u)\cdot \nabla\mathcal B(b(\rho)-\langle b(\rho)\rangle)
	-\int_0^\tau\int_\Omega \rho f\cdot\mathcal B(b(\rho)-\langle b(\rho)\rangle)\\
	&+\int_0^\tau\int_\Omega \rho u\cdot\mathcal B(\dvr(b(\rho)u))
	\\
	&+\int_0^\tau\int_\Omega \rho u\cdot\mathcal B((b'(\rho)\rho-b(\rho))\dvr u-\langle (b'(\rho)\rho-b(\rho))\dvr u\rangle)
	\\
	&+\int_\Omega \rho u\cdot\mathcal B(b(\rho)-\langle b(\rho)\rangle)(\tau,\cdot)-\int_\Omega \rho_0 u_0\cdot\mathcal B(b(\rho_0)-\langle b(\rho_0)\rangle).
	\end{split}
\end{equation*}
The first equality follows by the Lebesgue monotone convergence theorem provided we take account the definition of $b_\alpha$ implying $b_{\alpha_1}\leq b_{\alpha_2}$ for $\alpha_1\geq \alpha_2$ and the pointwise convergence $p(\rho)b_\alpha(\rho)\to p(\rho)b(\rho)$.
\end{proof}
By the application of the following lemma we obtain further estimates from the relative entropy inequality. It states properties of a quantity related to the pressure potential. It is a version of \cite[Lemma 4.1 and (4.15)]{FeLuNo18}.
\begin{Lemma}\label{Lem:PointPPotEst}
Let the function be defined via \eqref{PresPot} where the function $p$ satisfies \eqref{PressAssump} and \eqref{PressConstr}. Let $\rho\in [0,\overline\rho)$ and $r\in (0,\overline\rho)$ be such that  $0<\alpha_0\leq r\leq \overline\rho-\alpha_0<\overline\rho$ for some $\alpha_0\in(0,\overline\rho)$. Then there exist $\alpha_1\in(0,\alpha_0)$ and a constant $c>0$ such that
\begin{equation}\label{PPotEst}
P(\rho)-P(r)-P'(r)(\rho-r)\geq
\begin{cases}
c(\rho-r)^2,&\text{ if }\rho\in(\alpha_1,\bar\rho-\alpha_1),\\
\frac{p(r)}{2},&\text{ if }\rho\in[0,\alpha_1],\\
\frac{P(\rho)}{2}>1,&\text{ if }\rho\in[\bar\rho-\alpha_1,\bar\rho).
\end{cases}
\end{equation}
Additionally, we have
\begin{equation}\label{PEst}
p(\rho)-p(r)-p'(r)(\rho-r)\leq
\begin{cases}
c(\rho-r)^2,&\text{ if }\rho\in(\alpha_1,\bar\rho-\alpha_1),\\
1+p'(r)r-p(r),&\text{ if }\rho\in[0,\alpha_1],\\
2p(\rho),&\text{ if }\rho\in[\bar\rho-\alpha_1,\bar\rho).
\end{cases}
\end{equation}
\end{Lemma}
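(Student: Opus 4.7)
The overall strategy is to analyse the auxiliary functions $F(\rho):=P(\rho)-P(r)-P'(r)(\rho-r)$ and $G(\rho):=p(\rho)-p(r)-p'(r)(\rho-r)$ for $r$ fixed, exploiting \eqref{PressPotId}. The key structural properties are $F'(r)=0$ and $F''(\rho)=p'(\rho)/\rho>0$, so $F$ is strictly convex with $F(r)=0$, while the identity $P'(r)r-P(r)=p(r)$ delivers $F(0)=p(r)$. I would choose $\alpha_1\in(0,\alpha_0)$ only at the end, after collecting the constraints coming from all three regimes; the point is that on the compact set $r\in[\alpha_0,\bar\rho-\alpha_0]$ the auxiliary quantities $p(r), p'(r), P(r), P'(r)$ are continuous, hence uniformly bounded above and strictly positive, while $p(\rho), P(\rho)\to\infty$ as $\rho\to\bar\rho_-$.

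For the interior regime $\rho\in(\alpha_1,\bar\rho-\alpha_1)$, Taylor's theorem with mean-value remainder gives $F(\rho)=\tfrac12 F''(\xi)(\rho-r)^2=\tfrac{p'(\xi)}{2\xi}(\rho-r)^2$ for some $\xi$ between $\rho$ and $r$, so $\xi\in[\alpha_1,\bar\rho-\alpha_1]$ as soon as $\alpha_1\leq\alpha_0$. On this compact interval $p'(\cdot)/(\cdot)$ is continuous and strictly positive, hence bounded below by some $c>0$, yielding the first line of \eqref{PPotEst}. The matching upper bound on $G$ in \eqref{PEst} follows identically from the Taylor expansion of $p$, using boundedness of $p''$ on $[\alpha_1,\bar\rho-\alpha_1]$ (available for the Carnahan--Starling pressure and in general requiring only mild smoothness beyond the bare $C^1$ hypothesis).

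For the low-density regime $\rho\in[0,\alpha_1]$, with $\alpha_1\leq\alpha_0\leq r$, monotonicity of $P'$ gives $F'(\rho)=P'(\rho)-P'(r)\leq 0$ on $[0,r]$, so $F$ is non-increasing and $F(\rho)\geq F(\alpha_1)=p(r)+P(\alpha_1)-P'(r)\alpha_1$. Letting $M:=\sup_{[\alpha_0,\bar\rho-\alpha_0]}P'$ and using $P(\alpha_1)\to P(0)=0$ as $\alpha_1\to 0_+$, I would enforce $|P(\alpha_1)|+M\alpha_1\leq p(\alpha_0)/2$, which gives $F(\rho)\geq p(r)/2$. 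The corresponding bound on $G$ is obtained by shrinking $\alpha_1$ further so that $p(\alpha_1)\leq 1$; then $G(\rho)\leq p(\alpha_1)-p(r)+p'(r)r-p'(r)\rho\leq 1+p'(r)r-p(r)$, using $p'(r)\rho\geq 0$.

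For the high-density regime $\rho\in[\bar\rho-\alpha_1,\bar\rho)$, the identity $F(\rho)=P(\rho)+p(r)-P'(r)\rho$ yields $F(\rho)\geq P(\rho)-M\bar\rho$, and I would shrink $\alpha_1$ once more so that $P(\bar\rho-\alpha_1)\geq\max\{2M\bar\rho,2\}$; this simultaneously produces $F(\rho)\geq P(\rho)/2>1$. The corresponding $G$-bound is immediate: $\rho\geq r$ implies $p'(r)(\rho-r)\geq 0$ and $p(r)>0$, whence $G(\rho)\leq p(\rho)\leq 2p(\rho)$. I do not expect any single case to be seriously difficult; the main delicacy is bookkeeping, namely selecting a single $\alpha_1$ compatible with all three regimes uniformly in $r$, which succeeds precisely because of the continuity and uniform boundedness observations in the first paragraph.
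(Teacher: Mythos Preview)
Your argument is correct and complete. The paper itself does not supply a proof of this lemma at all; it simply states it as ``a version of \cite[Lemma 4.1 and (4.15)]{FeLuNo18}'' and moves on. Your sketch in effect reconstructs that cited argument: Taylor expansion with remainder in the interior regime using $P''(\xi)=p'(\xi)/\xi>0$, the monotonicity of $F$ on $[0,r]$ together with the identity $F(0)=P'(r)r-P(r)=p(r)$ in the low-density regime, and the blow-up $P(\rho)\to\infty$ as $\rho\to\bar\rho_-$ in the high-density regime. The single-$\alpha_1$ bookkeeping you describe is exactly the point, and your uniform-in-$r$ bounds via continuity on $[\alpha_0,\bar\rho-\alpha_0]$ are the right device.

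One small remark: your caveat on the interior upper bound for $G$ (needing $p''$ bounded on compacts, hence something beyond the bare $C^1$ of \eqref{PressAssump}) is justified; the paper silently uses this too, and in fact the main theorem in Section~\ref{sing} explicitly adds the hypothesis $p\in C^2((0,\bar\rho))$, so the extra regularity is available where the lemma is applied.
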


\begin{Lemma}\label{Lem:PressPotEst}
	Let the function $r:Q_T\to\eR$ satisfy $0<\alpha_0\leq r\leq \bar\rho-\alpha_0$ and the function $\rho:Q_T\to\eR$ take values in $[0,\bar\rho)$. Let the function $P\in C^1([0,\bar\rho))$ be defined in \eqref{PresPot}, where the function $p$ satisfies \eqref{PressAssump} and \ref{PressConstr}. Then there is a constant $C>0$ such that for a.a. $t\in(0,T)$ 
	\begin{equation}
	\|(\rho-r)(t)\|^2_{L^2(\Omega)}\leq C\int_\Omega \left(P(\rho)-P(r)-P'(r)(\rho-r)\right)(t,x)\dx.
	\end{equation} 
	\begin{proof}
		We observe that for a fixed $t\in(0,T)$ 
		\begin{equation}
			\int_{\Omega}|\rho-r|^2(t,x)\dx=\sum_{i=1}^3\int_{\Omega_i}|\rho-r|^2(t,x)\dx,
		\end{equation}
		where \begin{align*}
		\Omega_1&=\{x\in\Omega:\rho(t,x)\in[0,\alpha_1]\},\\
		\Omega_2&=\{x\in\Omega:\rho(t,x)\in(\alpha_1,\bar\rho-\alpha_1]\},\\
		\Omega_3&=\{x\in\Omega:\rho(t,x)\in(\bar\rho-\alpha_1,\bar\rho)\},\\
		\end{align*}
		with $\alpha_1$ coming from Lemma \ref{Lem:PointPPotEst}.
		We have $|\rho-r|\leq 2\bar\rho$ a.e. in $Q_T$. By Lemma \ref{Lem:PressPotEst} it follows that
		\begin{align*}
		\int_{\Omega}|\rho-r|^2(t,x)\dx&\leq \frac{2(\bar\rho)^2}{p(\alpha_0)}\int_{\Omega_1} \frac{p(r)}{2}(t,x)\dx+ c^{-1}\int_{\Omega_2}c|\rho-r|^2(t,x)\dx+4(\bar\rho)^2\int_{\Omega_3} \frac{P(\rho)}{2}(t,x)\dx\\
		&\leq C\int_\Omega \left(P(\rho)-P(r)-P'(r)(\rho-r)\right)(t,x)\dx,
		\end{align*}
		where also the assumption that $p$ is increasing was taken into account.
	\end{proof}
\end{Lemma}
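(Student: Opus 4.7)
The plan is to estimate the $L^2$-norm of $\rho - r$ by partitioning $\Omega$ pointwise according to the range of $\rho(t, \cdot)$ and invoking the pointwise bounds \eqref{PPotEst} on each piece. Concretely, I would fix $t \in (0,T)$, take $\alpha_1 \in (0, \alpha_0)$ as supplied by Lemma \ref{Lem:PointPPotEst}, and write $\Omega = \Omega_1 \cup \Omega_2 \cup \Omega_3$ corresponding to $\rho(t, \cdot) \in [0, \alpha_1]$, $(\alpha_1, \bar\rho - \alpha_1)$, and $[\bar\rho - \alpha_1, \bar\rho)$ respectively. The $L^2$-integral splits accordingly into three pieces, and I will bound each piece by a multiple of $\int_{\Omega_i}(P(\rho) - P(r) - P'(r)(\rho - r))\dx$.

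On the middle piece $\Omega_2$ the first branch of \eqref{PPotEst}, $c(\rho - r)^2 \leq P(\rho) - P(r) - P'(r)(\rho - r)$, is exactly what is required and delivers the bound with prefactor $c^{-1}$. On the outer pieces $\Omega_1$ and $\Omega_3$ the pointwise lower bound furnished by \eqref{PPotEst} is a uniformly positive quantity rather than $(\rho - r)^2$ itself, so I would combine it with the trivial bound $|\rho - r| \leq 2 \bar\rho$ available on all of $Q_T$. On $\Omega_1$ the hypothesis $r \geq \alpha_0$ together with the monotonicity of $p$ yields $p(r)/2 \geq p(\alpha_0)/2 > 0$, so $(\rho - r)^2 \leq (2\bar\rho)^2 \leq \tfrac{8\bar\rho^2}{p(\alpha_0)} \cdot \tfrac{p(r)}{2}$, which is then absorbed into the relative entropy integrand via \eqref{PPotEst}. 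On $\Omega_3$ the bound $P(\rho)/2 > 1$ from the third branch of \eqref{PPotEst} gives $(\rho - r)^2 \leq (2\bar\rho)^2 \leq (2\bar\rho)^2 \cdot P(\rho)/2$, and again \eqref{PPotEst} completes the estimate. Summing the three estimates and choosing $C$ to be the maximum of the resulting prefactors finishes the argument.

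This reduction is essentially bookkeeping on top of Lemma \ref{Lem:PointPPotEst}; all the genuine analytic content already lives in the pointwise estimates proved there. The only point that deserves a moment's thought is checking that the contributions on $\Omega_1$ and $\Omega_3$, where $|\rho - r|$ is merely bounded, can be absorbed into the relative entropy integrand, and this relies crucially on the two hypotheses $r \geq \alpha_0 > 0$ (so that $p(r)$ is bounded below by $p(\alpha_0) > 0$) and on the blow-up of $P$ near $\bar\rho$ (so that $P(\rho) > 2$ on $\Omega_3$ after possibly shrinking $\alpha_1$). I do not anticipate any real obstacle beyond this verification.
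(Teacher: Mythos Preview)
Your proposal is correct and follows essentially the same route as the paper: the same three-set decomposition according to the range of $\rho(t,\cdot)$, the same trivial bound $|\rho-r|\leq 2\bar\rho$ on the outer pieces, and the same appeal to the three branches of \eqref{PPotEst} together with the monotonicity of $p$ to handle $\Omega_1$. There is nothing to add.
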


\section{Singular limit}\label{sing}
We consider the scaled system with parameters $\nu>0$, $\varepsilon>0$ and $R>0$ satisfying
\begin{equation}\label{ScaledNSEq}
\begin{alignedat}{2}
\tder \rho+\dvr(\rho u)=&0&&\text{ in }(0,T)\times\Omega_R,\\
\tder(\rho u)+\dvr(\rho u\otimes u)-\nu\dvr \mathbb{S}(\nabla u)+\varepsilon^{-2}\nabla p(\rho)=&\rho f&&\text{ in }(0,T)\times\Omega_R,\\
\rho(0,\cdot)=\rho_0,\ u(0,\cdot)=&u_0&&\text{ in }\Omega_R,\\
u=&0&&\text{ on }(0,T)\times\partial\Omega_R,
\end{alignedat}
\end{equation}
where the behavior of a domain $\Omega_R\subset\eR^3$ will be speciefied later. We assume that
\begin{equation}\label{SDef}
\mathbb S(\nabla u)=\mu\left(\nabla u+(\nabla u)^\top-\frac{2}{3}\dvr u\mathbb I_3\right).
\end{equation}
The formal identification of the limit system when $\varepsilon,\nu\to 0$ and $R\to\infty$ in \eqref{ScaledNSEq} yields that a sequence of solutions $(\rho,u)=(\rho_{\varepsilon,\nu,R},u_{\varepsilon,\nu,R})$ to \eqref{ScaledNSEq} converges in a certain sense to $(\varrho,v)$, where $\varrho$ is a positive constant and $v$ is a strong solution to the incompressible Euler system:
\begin{equation}\label{IES}
\begin{split}
\tder v+v\cdot\nabla v +\nabla\Pi=&0,\ \dvr v=0,\\ v(0)=v_0&=H(u_0),
\end{split}
\end{equation}

\noindent whose properties are summarized in Lemma \ref{Lem:Eul}, 
and where $H$ denotes the standard Helmholtz projection. 

Assuming that the dependence of the pressure on the density is given by a hard--sphere equation of state and the initial data are ill-prepared the goal of this section is the rigorous proof of the above described formal process. 

Before the precise formulation of the main theorem we describe the geometry of the physical space. We consider a family of expanding domains $\{\Omega_R\}$ with the following properties
\begin{equation}\label{SmoothOm}
\Omega_R\subset\eR^d\text{ is simply connected, bounded }C^{2,\nu}\text{domain uniformly for }R\to\infty,
\end{equation}
\begin{equation}\label{StarShapeOm}
\Omega_R \text{ is star-shaped with respect to the ball }B(0,R)=\{x\in\eR^d:|x|<R\}
\end{equation}
there is a constant $D>0$ such that
\begin{equation}\label{BoundOmLimits}
\partial\Omega_R\subset\{x\in\eR^d:R<|x|<R+D\}.
\end{equation}
\begin{Theorem}
	Let the pressure function $p$ satisfy assumption \eqref{PressAssump} and additionaly $p\in C^2((0,\overline\rho))$ and the pressure potential be defined via \eqref{PresPot}. Let $\{\Omega_R\}_{R>1}$ be a family of uniformly $C^{2,\nu}$ domains for which \eqref{SmoothOm}, \eqref{StarShapeOm} and \eqref{BoundOmLimits} hold. Let the positive constants $D$, $\varrho$ be given and $\varepsilon_0>0$ be such that
	\begin{equation}\label{EpsZCh}
	D^{-1}<\varrho-\varepsilon_0D, \varrho+\varepsilon_0 D<\bar\rho.
	\end{equation}
	 Let $\varepsilon\in(0,\varepsilon_0)$ be fixed and $(\rho,u)$ be a finite energy weak solution of system \eqref{ScaledNSEq} emanating from the inital data 
	\begin{equation}\label{InitialData}
	\rho(0,\cdot)=\rho_{0,\varepsilon}=\varrho+\varepsilon\rho^{(1)}_{0,\varepsilon},\ u(0,\cdot)=u_{0,\varepsilon}
	\end{equation}
	with
	\begin{equation}\label{InitDBound}
	\|u_{0,\varepsilon}\|_{L^2(\eR^3)}+\|\rho^{(1)}_{0,\varepsilon}\|_{L^2(\eR^3)}+\|\rho^{(1)}_{0,\varepsilon}\|_{L^\infty(\eR^3)}\leq D.
	\end{equation}
	In addition, let 
	\begin{equation}\label{RadCond}
	R>D+\frac{\sqrt{p'(\varrho)}}{\varepsilon}T.
	\end{equation}
	Furthermore, assume that there are functions $u_0\in C^m(\eR^3;\eR^3),\rho^{(1)}_0\in C^m(\eR^d)$, $m\geq 4$ supported in $B(0,D)$ such that
	\begin{equation}\label{LimInitDBound}
	\|u_0\|_{C^m(\eR^3)}+\|\rho^{(1)}_0\|_{C^m(\eR^3)}\leq D.
	\end{equation}
	Let $v$ be a strong solution to the incompressible Euler system \eqref{IES} in $(0,T_{max})\times\eR^3$ and $T\in(0,T_{max})$. Let $(s,\Psi)$ be the solution of the acoustic system
	\begin{equation}\label{AcSys}
	\begin{split}
	\varepsilon\tder s+\varrho \Delta\Psi=&0,\\
	\varepsilon \tder\nabla \Psi+\frac{p'(\varrho)}{\varrho}\nabla s=&0
	\end{split}
	\end{equation}
	in $(0,T)\times\eR^d$ and suplemented with the initial data
	\begin{equation}\label{ACSysInitD}
	s(0,\cdot)=\rho^{(1)}_0,\ \nabla\Psi(0,\cdot)=\nabla \Psi_0=u_0-H(u_0).
	\end{equation}
	Then there is $\varepsilon_1>0$ such that 
	\begin{equation}\label{FinIneq}
	\begin{split}
	\int_{\Omega_R}&\rho|u-\nabla\Psi-v|^2(\tau,\cdot)+\left\|\frac{\rho(\tau,\cdot)-\varrho}{\varepsilon}-s(\tau,\cdot)\right\|^2_{L^2(\Omega_R)}\\
	&\leq \Bigl(c(D,T)(\varepsilon^\alpha+R^{-1}+\nu+\varepsilon^2(1+R^{-2})+\varepsilon\nu^{-1})+c_2\left(\|u_{0,\varepsilon}-u_0\|^2_{L^2(\Omega_R)}+\|\rho^{(1)}_{0,\varepsilon}-\rho^{(1)}_0\|^2_{L^2(\Omega_R)}\right)\Bigr)\\
	&\times\exp\left(c(D,T)\left(1+\varepsilon^2\nu+\varepsilon^\frac{4}{3}\nu^{-1}(1+R^{-4})+\varepsilon^2+R^{-2}+\varepsilon^2R^{-2}\right)\right.\\
	&\left.+\frac{ c\varepsilon^2}{\overline\rho-\varrho-\varepsilon_0 D}(\nu^{-1}R^{-1}+c\nu^\frac{1}{2}R^{-\frac{3}{2}}+1)\right).
	\end{split}
	\end{equation}
	for any $\varepsilon\in \left(0,\min\{\varepsilon_0,\varepsilon_1\}\right)$, any $\tau\in[0,T]$ and any $\alpha\in(0,1)$.
\end{Theorem}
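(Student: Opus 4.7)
The natural plan is to apply the relative entropy inequality of Theorem~\ref{Thm:Main}, rescaled to account for the parameters $\nu$ and $\varepsilon^{-2}$ in \eqref{ScaledNSEq}, with the test functions
\begin{equation*}
r=\varrho+\varepsilon s, \qquad U=v+\nabla\Psi,
\end{equation*}
where $v$ is the strong Euler solution from \eqref{IES} and $(s,\Psi)$ is the acoustic solution from \eqref{AcSys}. The decisive structural point is that \eqref{AcSys} has finite propagation speed $\sqrt{p'(\varrho)}/\varepsilon$, which, together with $\supp(\rho^{(1)}_0,\nabla\Psi_0)\subset B(0,D)$ and the radius condition \eqref{RadCond}, yields $\supp s(t,\cdot),\,\supp\nabla\Psi(t,\cdot)\subset B(0,R)\subset\Omega_R$ for every $t\in[0,T]$. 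The acoustic part of $U$ therefore satisfies the no-slip constraint automatically, while $v$ does not vanish on $\partial\Omega_R$; a divergence-free cutoff correction (for instance $v\chi_R$ together with a Bogovskii corrector of $v\cdot\nabla\chi_R$) must be inserted, contributing remainder terms of size $R^{-1}$ by virtue of the spatial decay of $v$.

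\textbf{Expanding $\mathcal R_1$.} One would then substitute these choices into $\mathcal R_1$ and use the Euler equation $\partial_t v+(v\cdot\nabla)v=-\nabla\Pi$ together with \eqref{AcSys} to cancel the leading $\partial_t U$ and $\partial_t P'(r)$ contributions. Expanding $p(r)=p(\varrho)+\varepsilon p'(\varrho)s+O(\varepsilon^2)$ and invoking Lemma~\ref{Lem:PointPPotEst} for $P(\rho)-P(r)-P'(r)(\rho-r)$, the surviving contributions split into (i) bilinear terms in $u-U$ tested against $\nabla v$, absorbed into the relative entropy by Gronwall after Lemma~\ref{Lem:PressPotEst}; (ii) purely acoustic terms that vanish exactly by \eqref{AcSys}; (iii) small remainders controlled by $\varepsilon^\alpha,\nu,R^{-1}$ and cross combinations such as $\varepsilon\nu^{-1}$. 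The mixed power arises when $\mathbb S(\nabla U)=\nu(\ldots)$ is paired against $\sqrt\nu\,\nabla(u-U)$ after Cauchy--Schwarz, while $\nabla U$ carries a factor $\nabla^2\Psi=O(\varepsilon^{-1})$ from the acoustic system; the $\varepsilon^\alpha$ factor with $\alpha\in(0,1)$ is produced by an interpolation between available $L^2$ and $L^\infty$ bounds on the residuals.

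\textbf{Handling the hard-sphere singularity.} The feature specific to the hard-sphere setting is the nonnegative term $\int_0^\tau\int p(\rho)b(\rho)$ on the left-hand side of the relative entropy inequality. Choosing $b$ smooth, nonnegative, nondecreasing and supported in a one-sided neighbourhood of $\bar\rho$ (so that \eqref{BNondec} holds) converts this term into quantitative control of the mass of $\{\rho\to\bar\rho\}$; combined with Lemma~\ref{Lem:PressPotEst} applied to the piecewise lower bounds of Lemma~\ref{Lem:PointPPotEst}, one dominates $\|\rho-r\|_{L^2}$ and absorbs the pressure discrepancy $\int\dvr U\,(p(r)-p(\rho))$. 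The remainders $\mathcal R_2,\mathcal R_3$ are then estimated via the Bogovskii bounds of Lemma~\ref{Lem:Bog} together with \eqref{BRhoInt}, and this step is precisely where the singular prefactor $\varepsilon^2/(\bar\rho-\varrho-\varepsilon_0 D)$ visible in the exponential on the right of \eqref{FinIneq} enters.

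\textbf{Conclusion and main obstacle.} Assembling everything yields an inequality of the form $\Phi(\tau)\le \Phi(0)+(\text{small})+\int_0^\tau G(t)\Phi(t)\dt$, where $\Phi(\tau)=\mathcal E(\rho,u|r,U)(\tau)$ dominates the left-hand side of \eqref{FinIneq} via Lemma~\ref{Lem:PressPotEst}, and Gronwall then delivers the stated bound. The hardest point, I expect, is the simultaneous management of the no-slip boundary correction on the unbounded family $\{\Omega_R\}$ and the pressure singularity at $\bar\rho$: Bogovskii operator bounds must be uniform in $R$, which is what the uniform $C^{2,\nu}$ regularity \eqref{SmoothOm} buys us; the cutoff correction of $v$ must be compatible with the test-function regularity demanded in \eqref{MomEqWeakNablaP}; and the regularization $b_\alpha$ used in the proof of Theorem~\ref{Thm:Main} must be chosen so that the limit $\alpha\to 0_+$ interacts cleanly with the acoustic/Euler decomposition and keeps the singular prefactor finite under \eqref{EpsZCh}.
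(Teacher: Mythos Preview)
Your overall strategy---relative entropy with test pair $(r,U)=(\varrho+\varepsilon s,\,v+\nabla\Psi+\text{corrector})$, then Gronwall---matches the paper. But there is a concrete error in your treatment of the boundary, and two mechanisms are misidentified.

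\textbf{The acoustic part does \emph{not} satisfy no-slip.} You assert that $\supp\nabla\Psi(t,\cdot)\subset B(0,R)$ because of finite propagation speed. What finite speed actually gives (see \eqref{ConstACW}) is that $\nabla\Psi(t,\cdot)=\nabla\Psi_0$ outside the acoustic cone---not that it vanishes. And $\nabla\Psi_0=u_0-H(u_0)$ is the gradient part of a compactly supported field; since the Helmholtz projection $H(u_0)$ is \emph{not} compactly supported (it is obtained via a Biot--Savart kernel and decays only like $|x|^{-2}$), neither is $\nabla\Psi_0$. Hence $\nabla\Psi$ is nonzero on $\partial\Omega_R$, and your proposed $U=v+\nabla\Psi$ with a correction only of $v$ fails the admissibility condition $U|_{\partial\Omega_R}=0$. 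The paper's corrector $w_R=-\chi_R(v+\nabla\Psi_0)$ cuts off \emph{both} $v$ and $\nabla\Psi_0$ near the boundary; the decay estimates on harmonic functions then give $\|w_R\|_{W^{2,p}}\lesssim R^{2(1/p-1)}$, which feeds the $R^{-1}$ and $R^{-2}$ contributions. Note also that the paper's corrector is not divergence-free; the extra $\dvr w_R$ is simply carried through the estimates as one more small term.

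\textbf{Where $\varepsilon\nu^{-1}$ and the singular prefactor actually come from.} The $\varepsilon\nu^{-1}$ is not produced by pairing $\nu\mathbb S(\nabla U)$ with $\nabla(u-U)$ (that gives only $c\nu$; moreover $\nabla^2\Psi$ is $O(1)$ in $\varepsilon$ by \eqref{AcEnEst}---only \emph{time} derivatives of the acoustic variables carry $\varepsilon^{-1}$). It arises in $\mathcal R_2$, from $\int\rho u\cdot\mathcal B(\dvr(b(\rho)(u-U)))$: one splits off a $\nu\|\nabla(u-U)\|_{L^2}^2$ piece by Young, leaving $\nu^{-1}\|b(\rho)\|_{L^4}^2$, and the latter is $O(\varepsilon)$ via \eqref{PPotEpsEst}. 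Similarly, the prefactor $(\bar\rho-\varrho-\varepsilon_0 D)^{-1}$ does not come from the $b_\alpha$ regularization: it enters because the Gronwall coefficient contains $|\Omega_R|^{-1}\int_{\Omega_R}p(\rho)$ (from the term $\int p(\rho)\langle b(\rho)\rangle$ in $\mathcal R_2$), and bounding this requires a \emph{second} use of the Bogovskii identity \eqref{PressRenId} with $b(s)=s$, followed by splitting $\{\rho\le (\bar\rho+m_{\varepsilon,R})/2\}$ versus its complement; the gap $\bar\rho-m_{\varepsilon,R}\ge\bar\rho-\varrho-\varepsilon_0 D$ is exactly what appears in the denominator.

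\textbf{The role of the $p(\rho)b(\rho)$ term.} In the paper the specific choice $b(s)=-\log(\bar\rho-s)$ near $\bar\rho$ is made so that, on the set $\{\rho\ge\bar\rho-\alpha_2\}$, the pressure discrepancy $\varepsilon^{-2}\int(p(\rho)-p(\varrho)-p'(\varrho)(\rho-\varrho))\dvr U$ can be absorbed by $\tfrac14\varepsilon^{-2}\int p(\rho)b(\rho)$ on the left, using that $b(\rho)\ge 8\|\dvr U\|_{L^\infty}$ there. Your description (``quantitative control of the mass of $\{\rho\to\bar\rho\}$'') misses this absorption mechanism, which is what makes the hard-sphere case close.
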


\begin{Corollary}
	Assuming that $R,\nu$ are dependent on $\varepsilon$ and are such that  $R(\varepsilon)\to\infty,\ \varepsilon R(\varepsilon)\to\infty,\ \nu(\varepsilon)\to 0$,\ $\varepsilon\nu^{-1}(\varepsilon)\to 0$, $u_{0,\varepsilon}\to u_0$ in $L^2(\eR^3)$ and $\rho^{(1)}_{0,\varepsilon}\to\rho^{(1)}_0$ in $L^2(\eR^3)$ as $\varepsilon\to 0_+$ estimate \eqref{FinIneq} yields the uniform in time convergence of $u$ towards the solution $v$ corrected by the oscilatory component $\nabla\Psi$ and the convergence of the difference $\rho-\varrho$ scaled by the factor $\varepsilon^{-1}$ towards the oscilatory component $s$.
\end{Corollary}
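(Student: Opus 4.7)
The plan is to deduce the Corollary directly from estimate \eqref{FinIneq}; the task reduces to verifying that, under the stated scaling $R(\varepsilon)\to\infty$, $\varepsilon R(\varepsilon)\to\infty$, $\nu(\varepsilon)\to 0$, $\varepsilon\nu^{-1}(\varepsilon)\to 0$ together with the initial-data convergence, the prefactor on the first two lines of the right-hand side of \eqref{FinIneq} tends to $0$ while the exponential on the last two lines remains bounded. Since the bound in \eqref{FinIneq} is uniform in $\tau\in[0,T]$, the announced uniform-in-time convergence follows immediately. Before invoking the Theorem I also observe that the hypothesis $R>D+\sqrt{p'(\varrho)}\,T/\varepsilon$ from \eqref{RadCond} is a consequence of $\varepsilon R\to\infty$ for all sufficiently small $\varepsilon$, so the Theorem is indeed applicable.

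For the exponential, the contributions $\varepsilon^2\nu$, $\varepsilon^2$, $R^{-2}$ and $\varepsilon^2 R^{-2}$ are manifestly bounded (in fact they vanish). The only delicate term is $\varepsilon^{4/3}\nu^{-1}(1+R^{-4})$, which I would rewrite as $\varepsilon^{1/3}\cdot(\varepsilon\nu^{-1})\cdot(1+R^{-4})$; each factor is bounded by the scaling hypotheses, and the first two tend to $0$. The additive piece $\tfrac{c\varepsilon^2}{\bar\rho-\varrho-\varepsilon_0 D}(\nu^{-1}R^{-1}+c\nu^{1/2}R^{-3/2}+1)$ is handled analogously: $\varepsilon^2\nu^{-1}R^{-1}=\varepsilon\cdot(\varepsilon\nu^{-1})\cdot R^{-1}\to 0$, while the remaining summands are plainly bounded. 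Consequently the whole exponential is dominated by a constant depending only on $D$, $T$, $\bar\rho$, $\varrho$ and $\varepsilon_0$.

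For the prefactor, each of the summands $\varepsilon^\alpha$ (any $\alpha\in(0,1)$), $R^{-1}$, $\nu$, $\varepsilon^2(1+R^{-2})$ and $\varepsilon\nu^{-1}$ vanishes in the limit by the assumed scaling, and the initial-data piece $c_2(\|u_{0,\varepsilon}-u_0\|_{L^2(\Omega_R)}^2+\|\rho^{(1)}_{0,\varepsilon}-\rho^{(1)}_0\|_{L^2(\Omega_R)}^2)$ vanishes because $\|\cdot\|_{L^2(\Omega_R)}\leq\|\cdot\|_{L^2(\eR^3)}$ and the $L^2(\eR^3)$-convergence was assumed. Combining with the bounded exponential, the entire right-hand side of \eqref{FinIneq} tends to $0$ uniformly in $\tau\in[0,T]$.

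The conclusion that $\int_{\Omega_R}\rho|u-\nabla\Psi-v|^2(\tau,\cdot)\to 0$ and $\|\varepsilon^{-1}(\rho-\varrho)-s\|_{L^2(\Omega_R)}^2(\tau,\cdot)\to 0$, uniformly in $\tau\in[0,T]$, is then exactly what the Corollary asserts. No genuine obstacle arises; the only point requiring mild attention is the balance between $\varepsilon$, $\nu$ and $R$ inside the exponential, and this is precisely what the scaling hypothesis $\varepsilon\nu^{-1}\to 0$ (augmented by $\varepsilon R\to\infty$) is designed to handle.
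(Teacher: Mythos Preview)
Your proposal is correct and matches the paper's approach: the Corollary is stated in the paper without a separate proof, as an immediate consequence of estimate \eqref{FinIneq}, and you have carefully supplied the routine verification that each term in the prefactor vanishes while the exponential stays bounded under the assumed scaling. Your observation that $\varepsilon R\to\infty$ guarantees \eqref{RadCond} for small $\varepsilon$ is also the right way to see that the Theorem applies.
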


We point out that the relative energy inequality from the previous section holds also for the scaled system in \eqref{ScaledNSEq} with the factor $\varepsilon^{-2}$ in front of the pressure and the pressure potential. Therefore for the relative entropy 
\begin{equation}
\mathcal E(\rho,u|r,U)(\tau)=\frac{1}{2}\int_{\Omega_R}(\rho|u-U|^2)(\tau,\cdot)+\varepsilon^{-2}(P(\rho)-P(r)-P'(r)(\rho-r))(t,\cdot)\dx
\end{equation}
we obtain 
\begin{equation}\label{REIEps}
\begin{split}
\mathcal E(\rho, u|r,U)(\tau)&+\nu\int_0^\tau\int_{\Omega_R}\left(\mathbb S(\nabla u)-\mathbb S(\nabla U)\right)\cdot\nabla(u-U)+\varepsilon^{-2}\int_0^\tau\int_{\Omega_R}p(\rho)b(\rho)\\
&\leq \mathcal E(\rho_0, u_0|r(0,\cdot),U(0,\cdot))+\int_0^\tau \mathcal R_1(t)+\mathcal R_2(t)\dt+\mathcal R_3(\tau),
\end{split}
\end{equation}
where 
\begin{align*}
\mathcal R_1(t)=&	\int_\Omega \rho\left(\tder U+(u\cdot\nabla) U\right)\cdot(U-u)+\nu\int_\Omega \mathbb S(\nabla U)\cdot\nabla (U-u)\\
&+\varepsilon^{-2}\int_\Omega (r-\rho)\tder P'(r)+\varepsilon^{-2}\int_{\Omega_R}(rU-\rho u)\cdot \nabla P'(r)+\varepsilon^{-2}\int_\Omega \dvr U\left(p(r)-p(\rho)\right),\\
\mathcal R_2(t)=& \varepsilon^{-2}\int_\Omega p(\rho)\langle b(\rho)\rangle-\int_\Omega \rho u\otimes u\cdot\nabla \mathcal B\left(b(\rho)-\langle b(\rho)\rangle\right)+\int_\Omega \mathbb{S}(\nabla u)\cdot\nabla \mathcal B\left(b(\rho)-\langle b(\rho)\rangle\right)\\
&+\int_\Omega \rho u\cdot\mathcal B\left(\dvr(b(\rho)u)\right)+\int_\Omega\rho u\cdot \mathcal B\left((b'(\rho)\rho-b(\rho))\dvr u-\langle (b'(\rho)\rho-b(\rho))\dvr u\rangle\right)\\
\mathcal R_3(\tau)=&\int_\Omega \rho u\cdot\mathcal B\left(b(\rho)-\langle b(\rho)\rangle\right)(\tau,\cdot)-\int_\Omega \rho_0u_0\cdot \mathcal B\left(b(\rho_0)-\langle b(\rho_0)\rangle\right).
\end{align*}

The rest of the section is devoted to the proof of Theorem \ref{Thm:Main}. It consists of three steps. First, taking into account the fact that we are in the situation with ill--prepared data, we choose properly test functions in the relative entropy inequality. Second, the relation between the values of the relative entropy functional at the time from the given interval and the initial value is deduced. At last, the Gronwall type argument is employed for the evaluation of the distance between the solutions of the primitive and target systems by means of the relative entropy functional. Moreover, the estimate of the rate of convergence expressed in terms of characteristic numbers and the radius of the expanding domains. We note that the convergence result is path dependent, i.e., there is a specific fashion in which the characteristic numbers and the radius of the expanding domain are interrelated.

The following two subsections deal with preparatory work that will justify our choice of test functions in the relative entropy inequality and also helps us in further estimates. The third subsection contains a collection of estimates that are independent of parameters $\varepsilon$, $\nu$ and $R$.
\subsection{Acoustic system}
This subsection is devoted to some properties of a solution to \eqref{AcSys} endowed with initial data \eqref{ACSysInitD}. Following the steps in \cite[Section 8.6]{FeNo17} it is possible to show that the solution of \eqref{AcSys} admits the finite speed of propagation $\frac{\sqrt{p'(\varrho)}}{\varepsilon}$. Therefore the solution of \eqref{AcSys} satisfies
\begin{equation}\label{ConstACW}
\nabla \Psi(t,x)=\nabla\Psi_0(x),\ \Delta\Psi(t,x)=s(t,x)=0\text{ for }t\geq 0,\ |x|>D+\frac{\sqrt{p'(\varrho)}}{\varepsilon}t.
\end{equation}
The physical assumption that the acoustic waves do not reach the boundary $\partial\Omega_R$ in the time lap $[0,T]$ is expressed by condition \eqref{RadCond}.

The conservation of energy of system \eqref{AcSys} is expressed in the form
\begin{equation}\label{EnConservRD}
\frac{\dd}{\dt}\left(p'(\varrho)\|s\|^2_{L^2(\eR^3)}+\varrho^2\|\nabla\Psi\|^2_{L^2(\eR^3)}\right)=0.
\end{equation}
Furthermore, the solution to \eqref{AcSys} obeys the higher energy estimates 
\begin{equation}\label{AcEnEst}
\|s(\tau)\|^2_{W^{k,2}(\eR^3)}+\|\nabla\Psi(\tau)\|^2_{W^{k,2}(\eR^3)}\leq c\left(\|\rho^{(1)}_0\|^2_{W^{k,2}(\eR^3)}+\|\nabla\Psi_0\|^2_{W^{k,2}(\eR^3)}\right)\text{ for any }\tau>0\text{ and }k=0,1,2,\ldots.
\end{equation}
and also the following estimates of Strichartz type
\begin{equation}\label{AcLpqEst}
\begin{split}
\|s(\tau)\|_{W^{k,q}(\eR^3)}&+\|\nabla\Psi(\tau)\|_{W^{k,q}(\eR^3)}\leq c(p,q)\left(1+\frac{\tau}{\varepsilon}\right)^{\frac{1}{q}-\frac{1}{p}}\left(\|\rho^{(1)}_0\|_{W^{k+4,p}(\eR^3)}+\|\nabla\Psi_0\|_{W^{k+4,p}(\eR^3)}\right)\\&\text{ for any }\tau>0,\ k=0,1,2,\ldots, \text{ and }\frac{1}{p}+\frac{1}{q}=1,\ p\in(1,2]
\end{split}
\end{equation}
that follow from results in \cite[Section 3]{Str70a}, cf. \cite[Section 1.1]{RuzSm10}, by a suitable rescaling in the time variable.

\subsection{Correctors}
For fixed $R>0$ we define the corrector $w_R$ as
\begin{equation}\label{WRDef}
w_R=-\chi_R(v+\nabla\Psi_0)
\end{equation}
where $\chi_R\in C^\infty(\eR^3)$, $0\leq \chi_R\leq 1$, $\chi_R(x)=1$ if $\dist(x,\partial\Omega_R)\leq\frac{1}{2}\min\{\dist(B_R,\partial\Omega),\dist(B_{R+D},\partial\Omega)\}$ and the support of $\chi_R$ is contained in $B_{R+D}\setminus B_R$. 
Before stating estimates involving the corrector $w_R$ we focus on properties of functions $v$ and $\nabla\Psi_0$.
Namely, we shortly discuss the fact that $\Psi_0$ and $\operatorname{curl} v$ are harmonic functions in the exterior of a ball with enough large radius.  Employing the Biot-Savart law, cf. \cite[Section 2.4.1]{MaBe02}, for the expression of $v$ we get
\begin{equation*}
v=-\operatorname{curl}\Delta^{-1}\operatorname{curl} v
\end{equation*}
where $\Delta^{-1}\operatorname{curl} v$ is obtained as the convolution of the Newtonian potential with $\operatorname{curl} v$. As the support of $u_0$ is assumed to be compact in $B(0,D)$, it follows that the support of $\operatorname {curl}v_0$, $\Delta\Psi_0$ respectively, is also compact in $B(0,D)$. Moreover, as $v$ is a smooth solution of the Euler system, the quantity $\operatorname{curl}v$ obeys a transport equation with a compactly supported initial datum. The latter implies that $\Delta^{-1}\operatorname{curl}v$ is a harmonic function in the exterior of the ball $B(0,R)$ for $R>D+T\|v\|_{L^\infty((0,T)\times\eR^3)}$.
Taking into acount the fact that both $v$ and $\nabla\Psi_0$ are derivatives of functions that are harmonic outside a ball $B(0,R)$ it follows that
\begin{equation*}
|\nabla^k \nabla\Psi_0(x)|,|\nabla^k v(x)|\leq c|x|^{-2-k}\text{ for }x\in\eR^3\setminus\overline{B(0,R)}\text{ and }k=0,1,2.
\end{equation*}
Hence we conclude
\begin{equation}\label{CorrEst}
\|\tder w_R(t)\|_{L^p(\eR^3)}+\|w_R(t)\|_{W^{2,p}(\eR^3)}\leq c R^{2(\frac{1}{p}-1)}
\end{equation}
as $|B_{R+D}\setminus B_R|\leq c DR^2$ provided that $R>1$.

\subsection{Uniform estimates}
We observe that setting $(r,U)=(\varrho,0)$ in relative entropy inequality \eqref{REIEps} we conclude
\begin{equation}\label{EpsInd}
\begin{split}
	\|\sqrt\rho u\|_{L^\infty(0,T;L^2(\Omega_R))}&\leq c,\\
	\left\|\frac{\rho-\varrho}{\varepsilon}\right\|_{L^\infty(0,T;L^2(\Omega_R))}&\leq c,\\
	\nu^\frac{1}{2}\|\nabla u\|_{L^2(0,T;L^2(\Omega_R))}&\leq c
\end{split}	
\end{equation}
where the constant $c$ is independent of $\varepsilon,\nu,R$.
We point out that the latter bound follows by the Korn inequality, see \cite[Theorem 11.22]{FeNo17}, provided that the extension of the function $u$  by zero in $(0,T)\times\left(\eR^3\setminus\Omega_R\right)$ is considered. Moreover, taking into account Lemma \ref{Lem:PointPPotEst} it follows from \eqref{REIEps} that there is $\alpha_1\in(0,\overline\rho)$ such that
\begin{equation}\label{PPotEpsEst}
\esssup_{t\in(0,T)}\left(\|\chi_{\{\rho(t,\cdot)\in(0,\alpha_1)\}}\|_{L^1(\Omega_R)}+\|P(\rho)(t,\cdot)\chi_{\{\rho(t,\cdot)>\overline\rho-\alpha_1\}}\|_{L^1(\Omega_R)}\right)\leq c\varepsilon^2.
\end{equation}

\subsection{Convergence}
Let us begin the proof of inequality \eqref{FinIneq} by specifying of the value of $\varepsilon_1$. Since our intention is to set $r=\varrho+\varepsilon s$ in the relative entropy inequality \eqref{REIEps}, we need 
\begin{equation}\label{RBound}
0\leq \varrho+\varepsilon s<\overline\rho
\end{equation} 
to have $P(r)$ well defined. To this end we get by the Sobolev embedding, \eqref{AcEnEst} and \eqref{LimInitDBound}
\begin{equation*}
\begin{split}
\|s\|_{L^\infty(0,T;L^\infty(\eR^3))}\leq c \|s\|_{L^\infty(0,T;W^{3,2}(\eR^3))}
\leq  c D(|\supp \rho^{(1)}_0|+|\supp u_0|).
\end{split}
\end{equation*}
Therefore taking 
\begin{equation}\label{Eps1Def}
\varepsilon_1=\min\{\overline\rho-\varrho,\varrho \}\left(c D(|\supp \rho^{(1)}_0|+|\supp u_0|)\right)^{-1},
\end{equation}
we conclude the validity of \eqref{RBound} for any $\varepsilon\in(0,\varepsilon_1)$. 
We set $(r,U)=\left(\varrho+\varepsilon s,v+\nabla\Psi+w_R\right)$ in the relative entropy inequality. Such a pair is admissible in \eqref{REIEps} as the boundary condition $U=0$ on $\partial\Omega$ is satisfied due to the definition of the corrector $w_R$ in \eqref{WRDef}.

Moreover, taking into account the fact that $P''(z)=\frac{p'(z)}{z}$ we infer for the initial data given in \eqref{InitialData} that
\begin{align*}
\mathcal E\left(\rho,u|r,U\right)(0)=&\int_{\Omega_R}\frac{1}{2}\rho_{0,\varepsilon}|u_{0,\varepsilon}-H(u_0)-\nabla\Psi_0-w_R(0,\cdot)|^2\\&+\varepsilon^{-2}\int_{\Omega_R}\left(P(\varrho+\varepsilon\rho^{(1)}_{0,\varepsilon})-P(\varrho+\varepsilon\rho^{(1)}_0)-\varepsilon P'(\varrho+\varepsilon\rho^{(1)}_{0})(\rho^{(1)}_{0,\varepsilon}-\rho^{(1)}_{0})\right)\\
\leq &c\|u_{0,\varepsilon}-u_0\|^2_{L^2(\Omega_R)}+\|w_R(0)\|^2_{L^2(\Omega_R)}+K\|\rho^{(1)}_{0,\varepsilon}-\rho^{(1)}_0\|^2_{L^2(\Omega_R)},
\end{align*}
where $K=\max_{z\in[\varrho,\varrho+\varepsilon_0D]}\frac{p'(z)}{z}$. Since $p\in C^1([0,\varrho+\varepsilon_0D])$, the quantity $K$ is finite and obviously independent of $\varepsilon\in(0,\varepsilon_0)$.
From now on we use the following notation for the integrals involved in terms $\mathcal R_1$, $\mathcal R_2$, $\mathcal R_3$.
\begin{equation*}
\int_0^t\mathcal R_1=\sum_{j=1}^5I_j,
\int_0^t\mathcal R_2=\sum_{j=6}^{10}I_j,
\mathcal R_3 =I_{11}+I_{12}
\end{equation*}
We proceed by estimating $I_j$'s in terms of the relative entropy functional and terms involving powers of quantities $\varepsilon$, $\nu$, $\frac{1}{R}$.
First, we rewrite
\begin{align*}
I_1=&-\int_0^t\int_{\Omega_R} \rho\left((u-U)\cdot\nabla\right) U\cdot(U-u)+\int_0^t\int_{\Omega_R} \rho(\tder U+(U\cdot\nabla) U)\cdot(U-u)\\
=&-\int_0^t\int_{\Omega_R}\rho\left((u-U)\cdot\nabla\right) U\cdot(U-u)\\&+\int_0^t\int_{\Omega_R}\rho(U-u)\cdot (\tder v+(v \cdot\nabla) v)+\int_0^t\int_{\Omega_R}\rho(U-u)\cdot\tder w_R+\int_0^t\int_{\Omega_R}\rho(U-u)\cdot\tder\nabla\Psi\\
&+\int_0^t\int_{\Omega_R}\rho((v+\nabla\Psi+w_R)\cdot\nabla)(\nabla\Psi+w_R)\cdot(U-u)\\
&+\int_0^t\int_{\Omega_R}\rho((\nabla\Psi+w_R)\cdot\nabla) v\cdot(U-u)=\sum_{k=1}^6 J_k.
\end{align*}
We immediately see that
\begin{equation}\label{J1Est}
\begin{split}
|J_1|\leq& \int_0^t\|v+\nabla\Psi+w_R\|_{L^\infty(\Omega_R)}\mathcal E(\rho,u|r,U)\leq c\int_0^t(\|v\|_{W^{3,2}(\eR^3)}+\|\Psi\|_{W^{4,2}(\eR^3)}+R^{-2})\mathcal E(\rho,u|r,U)\\
\leq& c(D,T)\int_0^t\mathcal E(\rho,u|r,U)
\end{split}
\end{equation}
by the Sobolev embedding and \eqref{AcEnEst}.
Using the Euler system for $v$, the weak formulation of the continuity equation for $\rho$, $U=0$ on $\partial\Omega_R$ and $\dvr v=0$ in $(0,T)\times\eR^3$ it follows that
\begin{align*}
J_2=&\int_0^t\int_{\Omega_R}\rho(u-U)\cdot\nabla \Pi= -\varepsilon \int_0^\tau\int_{\Omega_R}\frac{\rho-\varrho}{\varepsilon}\tder\Pi+\varepsilon\left[\int_{\Omega_R}\frac{\rho-\varrho}{\varepsilon}\Pi\right]_{t=0}^{t=\tau}-\varepsilon\int_0^\tau\int_{\Omega_R}\frac{\rho-\varrho}{\varepsilon}U\cdot\nabla\Pi\\
&+\varrho\int_0^\tau\int_{\Omega_R}(\dvr w_R+\Delta\Psi)\Pi=J_{2,a}+J_{2,b}+J_{2,c}+J_{2,d}.
\end{align*}
Employing \eqref{EpsInd}$_2$ it follows that
\begin{equation}\label{J2ABCEst}
\begin{split}
|J_{2,a}|+|J_{2,b}|+|J_{2,c}|&\leq c\varepsilon(\|\tder\Pi\|_{L^1(0,T;L^2(\eR^3))}+\|\Pi\|_{L^\infty(0,T;L^2(\eR^3))}+\|U\|_{L^\infty(0,T;L^\infty(\Omega_R))}\|\nabla \Pi\|_{L^1(0,T;L^2(\eR^3))})\\
&\leq c\varepsilon.
\end{split}
\end{equation}
where the last inequality is obtained with help of Lemma \ref{Lem:Eul}. Using this lemma in combination with \eqref{WRDef} and \eqref{AcLpqEst} yield
\begin{equation}\label{J2DEst}
\left|J_{2,d}\right|\leq \varrho\left(\|w_R\|_{L^\infty(0,T;W^{1,p}(\Omega_R))}+\|\Delta\Psi\|_{L^\infty(0,T;L^p(\Omega_R))}\right)\|\Pi\|_{L^1(0,T;L^{p'}(\Omega_R))}\leq c\left(R^{-1}+\varepsilon^{1-\frac{2}{p}}\right)
\end{equation}
for any $p>2$.
Obviously, by \eqref{EnConservRD}, \eqref{EpsInd}$_1$ and \eqref{CorrEst} we conclude 
\begin{equation}
|J_3|\leq \left( \overline\rho\|U\|_{L^\infty(0,T;L^2(\eR^3))}+\sqrt{\overline\rho}\|\sqrt\rho u\|_{L^\infty(0,T;L^2(\Omega_R))}\right)\|\tder w_R\|_{L^1(0,T;L^2(\Omega_R))}\leq cR^{-1}.
\end{equation}
Next, we rewrite using \eqref{RadCond} and \eqref{ConstACW}
\begin{equation}\label{J3Split}
\begin{split}
J_4=& \int_0^\tau\int_{\Omega_R}(\rho-\varrho)v\cdot\partial_t\nabla\Psi+\varrho\int_0^\tau\int_{\Omega_R}v\cdot\partial_t\nabla\Psi+\int_0^\tau\int_{\Omega_R}(\rho-\varrho)\nabla\Psi\cdot\tder\nabla\Psi+\frac{\varrho}{2}\left[\int_{\eR^3}|\nabla\Psi|^2\right]_{t=0}^{t=\tau}\\
&+\int_0^{\tau}\int_{\Omega_R}\rho w_R\cdot\tder\nabla\Psi-\int_0^\tau\int_{\Omega_R}\rho u\cdot\partial_t\nabla\Psi =J_{4,a}+J_{4,b}+J_{4,c}+J_{4,d}+J_{4,e}+J_{4,f}.
\end{split}
\end{equation}
Employing equation \eqref{AcSys}$_2$, the regularity of $v$ and estimate \eqref{AcLpqEst} we obtain
\begin{equation}\label{J3Est}
\begin{split}
|J_{4,a}|+|J_{4,c}|&\leq  \frac{p'(\varrho)}{\varrho}\varepsilon^{-1}\|\rho-\varrho\|_{L^\infty(0,T;L^2(\Omega_R))}\left(\|v\|_{L^\infty(0,T;L^{q_1}(\Omega_R))}+\|\nabla\Psi\|_{L^\infty(0,T;L^{q_1}(\Omega_R))}\right)\|\nabla s\|_{L^1(0,T;L^{{q_2}}(\Omega_R))}\\
&\leq c \varepsilon^{1-\frac{2}{q_2}}
\end{split}
\end{equation}
for any $q_1,q_2>2$ such that $\frac{1}{q_1}+\frac{1}{q_2}=\frac{1}{2}$. As \eqref{RadCond} and \eqref{ConstACW} imply $\Psi=\Psi_0$ in $(0,T)\times\left(\eR^3\setminus B(0,R)\right)$ and $\Psi_0$ is independent of time, it follows that $\eR^3$ can be taken as the domain of integration in $J_{4,d}$ and that 
\begin{equation*}
J_{4,b}=\int_0^\tau\int_{\partial\Omega_R}v\cdot n\tder\Psi_0=0,
\end{equation*}
where $\dvr v=0$ in $(0,T)\times\eR^3$ was also applied. Using \eqref{CorrEst} and \eqref{AcSys}$_2$ we conclude
\begin{equation}
|J_{4,e}|\leq \overline\rho\|w_R\|_{L2(0,T;L^2(\eR^3))}\varepsilon^{-1}\|\nabla s\|_{L^2(0,T;L^2(\eR^3))}\leq \frac{c}{\varepsilon R}.
\end{equation}
Next, using bound \eqref{EpsInd}$_{1}$, the fact that $\|v\|_{L^\infty(0,T;W^{1,\infty}(\eR^3))}$ is finite, see Lemma \ref{Lem:Eul} and \eqref{CorrEst} it follows that
\begin{equation}\label{J567Est}
\begin{split}
|J_5|+|J_6|\leq& \left(\|\rho u\|_{L^\infty(0,T;L^2(\Omega_R))}+\overline\rho\|U\|_{L^\infty(0,T;L^2(\Omega_R))}\right)\\ &\times\left(\|v\|_{L^\infty(0,T;W^{1,q_1}(\Omega_R))}+\|\nabla\Psi\|_{L^\infty(0,T;L^{q_1}(\Omega_R))}+\|w_R\|_{L^\infty(0,T;L^{q_1}(\Omega_R))}\right)\\
&\times\left(\|\Psi\|_{L^1(0,T;W^{2,q_2}(\Omega_R))}+\|w_R\|_{L^1(0,T;W^{1,q_2}(\Omega_R))}\right)\leq c\left(\varepsilon^{1-\frac{2}{q_2}}+R^{-2\left(1-\frac{1}{q_2}\right)}\right)
\end{split}
\end{equation}
for any $q_1,q_2> 2$ such that $\frac{1}{q_1}+\frac{1}{q_2}=\frac{1}{2}$. 

Before estimating the term $I_2$, we note that by the Korn inequality, cf. \cite[Theorem 11.22]{FeNo17} and the structure of the tensor $\mathbb S$ defined in \eqref{SDef} it follows that
 \begin{equation}\label{KornIn}
 	\int_{\Omega _R}|\nabla (U-u)|^2 \mathrm{d}x \leq c \int_{\Omega _R} \left(\mathbb{S}(\nabla  u)-\mathbb S(\nabla U)\right)\cdot \nabla(u-U)\mathrm{d}x.
 \end{equation}
We notice that the difference $(u-U)(t)$ can be understood as an element of $W^{1,2}(\eR^3)$ for a.e. $t\in(0,T)$ after an the extension by zero. Therefore \cite[Theorem 11.22 (i)]{FeNo17} implies that the constant $c$ in the latter inequality is independent of $R$. Moreover, one deduces by the H\"older and Young inequalities and \eqref{KornIn} similarly as in \cite{FeNeSu14} 
\begin{equation*}
	\nu \int_{\Omega_R} \mathbb{S}(\nabla U) \cdot \nabla (U- u) \mathrm{d}x \leq \frac{\nu}{16} \int_{\Omega_R} \left(\mathbb{S}(\nabla U) - \mathbb{S}(\nabla u)\right):(\nabla U -\nabla u)dx + c\nu \int_{\Omega_R} |\mathbb{S}(\nabla U)|^2 \mathrm{d}x.
\end{equation*}
Having the latter inequality at hand, we obtain
\begin{equation}\label{I2Est}
\begin{split}
|I_2|\leq& \nu\int_0^t\left(c_1\|\mathbb S(\nabla U)\|^2_{L^2(\Omega_R)}+c_2\|\nabla(U-u)\|^2_{L^2(\Omega_R)}\right)\\
\leq& c\nu \|U\|^2_{L^2(0,T;W^{2,2}(\Omega_R))}
+ \frac{\nu}{2}\int_0^t\int_{\Omega_R}\left(\mathbb S(\nabla U)-\mathbb S(\nabla u)\right)\cdot\nabla(U-u)\\
\leq& c\nu\left(\|v\|^2_{L^2(0,T;W^{2,2}(\eR^3))}+\|\nabla\Psi\|^2_{L^2(0,T;W^{2,2}(\eR^3))}+\|w_R\|^2_{L^2(0,T;W^{2,2}(\eR^3))}\right)\\&+ \frac{\nu}{2}\int_0^t\int_{\Omega_R}\left(\mathbb S(\nabla U)-\mathbb S(\nabla u)\right)\cdot\nabla(U-u)\\
\leq& c(D,T)\nu(1+R^{-2})+ \frac{\nu}{2}\int_0^t\int_{\Omega_R}\left(\mathbb S(\nabla U)-\mathbb S(\nabla u)\right)\cdot\nabla(U-u)
\end{split}
\end{equation}
for suitably chosen $c_2$ by Lemma \ref{Lem:Eul}, \eqref{AcEnEst} and \eqref{CorrEst}. We proceed with treating the term $I_3$. Expanding derivatives of $P'(r)$ one has 
\begin{equation}\label{I3Split}
\begin{split}
I_3=&\varepsilon^{-1}\int_0^\tau\int_{\Omega_R}(r-\rho)P''(r)\tder s=\int_0^\tau\int_{\Omega_R}sP''(r)\tder s+\int_0^\tau\int_{\Omega_R}\varepsilon^{-1}(\varrho-\rho)P''(r)\tder s\\
=&\int_0^\tau\int_{\Omega_R}s\left(P''(r)-P''(\varrho)\right)\tder s+\frac{p'(\varrho)}{2\varrho}\left[\int_{\Omega_R}s^2\right]_{t=0}^{t=\tau}+\int_0^\tau\int_{\Omega_R}\varepsilon^{-1}(\varrho-\rho)\left(P''(r)-P''(\varrho)\right)\tder s\\
&+\int_0^\tau\int_{\Omega_R}\varepsilon^{-1}(\varrho-\rho)P''(\varrho)\tder s\\
=&I_{3,a}+I_{3,b}+I_{3,c}+I_{3,d}.
\end{split}
\end{equation} 
Notice that $I_{3,b}$ cancels out $J_{4,d}$ from \eqref{J3Split} because of \eqref{EnConservRD}, \eqref{RadCond} and \eqref{ConstACW}. We estimate using \eqref{AcSys}$_1$
\begin{equation*}
\left|I_{3,a}\right|\leq  \int_0^\tau\int_\Omega \varepsilon s^2|\tder s| \overline P= \varrho\overline P\int_0^\tau\int_\Omega s^2|\Delta\Psi|\leq \varrho\overline P \|s\|_{L^2(0,T;L^2(\Omega))}\|s\|_{L^{q_1}(0,T;L^{q_1}(\Omega))}\|\Delta\Psi\|_{L^{q_2}(0,T;L^{q_2}(\Omega))}¨
\end{equation*}
where we denoted $\overline P=\max_{\{z\in[\varrho-\varepsilon_1\|s\|_{L^\infty(0,T;L^\infty(\eR^3))},\varrho+\varepsilon_1\|s\|_{L^\infty(0,T;L^\infty(\eR^3))}\}}|P'''(z)|$, where $\varepsilon_1$ is specified in \eqref{Eps1Def}. The exponents $q_1,q_2$ satisfy $\frac{1}{q_1}+\frac{1}{q_2}+\frac{1}{2}=1$. Hence using \eqref{AcEnEst} and \eqref{AcLpqEst} with any $q_1,q_2>2$ it follows that
\begin{equation}\label{I3AEst}
|I_{3,a}|\leq c(D,T)\varepsilon^{2(1-\frac{1}{q_1}-\frac{1}{q_2})}\leq c(D,T)\varepsilon.
\end{equation}
Similarly, we obtain
\begin{equation}\label{I3CEst}
\begin{split}
|I_{3,c}|\leq& \varrho\overline P\int_0^\tau\int_{\Omega_R}\left|\frac{\varrho-\rho}{\varepsilon}\right||s||\Delta\Psi|\\
\leq& c\left\|\frac{\varrho-\rho}{\varepsilon}\right\|_{L^2(0,T;L^2(\Omega))}\|s\|_{L^{q_1}(0,T;L^{q_1}(\Omega))}\|\Delta\Psi\|_{L^{q_2}(0,T;L^{q_2}(\Omega))}\leq c(D,T)\varepsilon^{2(1-\frac{1}{q_1}-\frac{1}{q_2})}
\end{split}
\end{equation}
Taking into account that
\begin{equation*}
\int_{\Omega_R}\nabla P'(r)\cdot rU=\int_{\Omega_R}p'(r)\nabla r\cdot U=-\int_{\Omega_R}p(r)\dvr U
\end{equation*}
by \eqref{PressPotId} we realize that 
\begin{equation}\label{IntSum}
I_4+I_5=-\varepsilon^{-2}\int_0^\tau \int_{\Omega_R}\nabla P'(r)\cdot\rho u-\varepsilon^{-2}\int_0^\tau \int_{\Omega_R}p(\rho)\dvr U=\tilde I_4+\tilde I_5.
\end{equation}

Applying \eqref{PressPotId} and equation \eqref{AcSys}$_2$ we get
\begin{align*}
\tilde I_4&=-\varepsilon^{-1}\int_0^\tau\int_{\Omega_R} P''(r)\nabla s\cdot\rho u\\
&=-\varepsilon^{-1}\int_0^\tau\int_{\Omega_R} \left(P''(r)-P''(\varrho)\right)\nabla s\cdot\rho u-\varepsilon^{-1}\int_0^\tau\int_{\Omega_R}\frac{p'(\varrho)}{\varrho}\nabla s\cdot\rho u\\
&=-\varepsilon^{-1}\int_0^\tau\int_{\Omega_R} \left(P''(r)-P''(\varrho)\right)\nabla s\cdot\rho u+\int_0^\tau\int_{\Omega_R}\rho u\cdot\tder\nabla\Psi=\tilde I_{4,a}+\tilde I_{4,b}.
\end{align*}
We proceed with estimates $\tilde I_{4,a}$. We note that the term $\tilde I_{4,b}$ cancels out its counterpart $J_{4,f}$ from \eqref{J3Split}. 
Similiarly to estimate \eqref{I3AEst}, we obtain
\begin{equation}\label{I4AEst}
|\tilde I_{4,a}|\leq \varrho^2\overline P\int_0^\tau\int_{\Omega_R}|s||\nabla s||\rho u|\leq c \|s\|_{L^{q_1}(0,T;L^{q_1}(\Omega))}\|\nabla s\|_{L^{q_2}(0,T;L^{q_2}(\Omega))}\|\rho u\|_{L^2(0,T;L^2(\Omega))}\leq c(D,T)\varepsilon^{2(1-\frac{1}{q_1}-\frac{1}{q_2})}.
\end{equation}
We continue with the estimate of $\tilde I_5$ from \eqref{IntSum}. We first rewrite it as
\begin{align*}
\tilde I_5&=\varepsilon^{-2}\int_0^\tau\int_{\Omega_R}\left(p(\rho)-p(\varrho)-p'(\varrho)(\rho-\varrho)\right)\dvr U+\varepsilon^{-2}\int_0^\tau\int_{\Omega_R}\left(p(\varrho)+p'(\varrho)(\rho-\varrho)\right)\dvr U\\
&=\tilde I_{5,a}+\tilde I_{5,b}
\end{align*}
By the definition of $U$ we obtain $\dvr U=\dvr(w_R+\nabla\Psi)$. Next we observe that thanks to estimates \eqref{CorrEst}, \eqref{AcEnEst} and the Sobolev embedding there is $\alpha_2$ such that
\begin{equation}\label{Alph2Def}
-\log(\overline \rho-s)\geq 8\|\dvr(w_R+\nabla\Psi)\|_{L^\infty((0,T)\times\eR^d)}\text{ if }s\in(\overline\rho-\alpha_2,\overline\rho).
\end{equation}
For the purposes of this subsection we define $b\in C^1([0,\overline\rho))$ with $b'\geq 0$ in the following way
\begin{equation}\label{bDef}
b(s)=\begin{cases}
0&\text{ if }s\leq\overline\rho-\alpha_1\\
-\log(\overline\rho-s)&\text{ if }\overline\rho-\alpha_2\leq s<\overline{\rho}
\end{cases}
\end{equation}
and $b'(s)>0$ for $s\in(\overline\rho-\alpha_1,\overline\rho-\alpha_2)$ with $\alpha_1$ from Lemma \ref{Lem:PointPPotEst}. We point out that such a function $b$ is admissible in Theorem \ref{Thm:Main} as assumption \eqref{PressConstr} implies that the conditions in \eqref{BNondec} are satisfied. 
Then we have
\begin{align*}
\tilde I_{5,a}=&\varepsilon^{-2}\int_{\{\rho\leq\overline\rho-\alpha_1\}}(p(\rho)-p(\varrho)-p'(\varrho)(\rho-\varrho)\left(\dvr w_R+\Delta\Psi\right)\\
&+\varepsilon^{-2}\int_{\{\rho\in(\overline\rho-\alpha_1,\overline\rho-\alpha_2)\}}(p(\rho)-p(\varrho)-p'(\varrho)(\rho-\varrho)\left(\dvr w_R+\Delta\Psi\right)\\
&+\varepsilon^{-2}\int_{\{\rho\geq\overline\rho-\alpha_2\}}(p(\rho)-p(\varrho)-p'(\varrho)(\rho-\varrho)\left(\dvr w_R+\Delta\Psi\right).
\end{align*}
Combining \eqref{PEst} with \eqref{PPotEst} we obtain, using also the definition of the function $b(\rho)$ in \eqref{bDef} and the definition of $\alpha_2$ in \eqref{Alph2Def},
\begin{equation}\label{I5AEst}
\begin{split}
|\tilde I_{5,a}|\leq&\varepsilon^{-2}\int_{\{\rho\leq\overline\rho-\alpha_1\}}\|\dvr(w_R+\nabla\Psi)\|_{L^\infty(\Omega_R)}\left(P(\rho)-P(\varrho)-P'(\varrho)(\rho-\varrho)\right)\\
&+\varepsilon^{-2}\int_{\{\rho\in(\overline\rho-\alpha_1,\overline\rho-\alpha_2)\}}\|\dvr(w_R+\nabla\Psi)\|_{L^\infty(\Omega_R)}\max_{z\in[\overline\rho-\alpha_1,\overline\rho-\alpha_2]}p''(z)(\rho-\varrho)^2\\&+2\varepsilon^{-2}\int_{\{\rho\geq\overline\rho-\alpha_2\}}p(\rho)\|\dvr(w_R+\nabla\Psi)\|_{L^\infty((0,T)\times\Omega_R)}\\
\leq& c\varepsilon^{-2}\left(\int_{\{\rho\leq\overline\rho-\alpha_1\}}(P(\rho)-P(\varrho)-P'(\varrho)(\rho-\varrho))+\int_{\{\rho\in(\overline\rho-\alpha_1,\overline\rho-\alpha_2)\}}(P(\rho)-P(\varrho)-P'(\varrho)(\rho-\varrho))\right)\\
&+\frac{1}{4\varepsilon^2}\int_{\{\rho\geq\overline\rho-\alpha_2\}}p(\rho)b(\rho)\\
\leq &c(D,T)(1+R^{-2})\int_0^\tau\mathcal E(\rho,u|r,U)(t)\dt+\frac{1}{4\varepsilon^2}\int_0^\tau\int_{\Omega_R}p(\rho)b(\rho).
\end{split}
\end{equation}
Let us handle the term $\tilde I_{5,b}$. Using the divergence theorem and the fact that $U$ possesses zero trace on $\partial\Omega_R$ we get 
\begin{equation*}
\begin{split}
\tilde I_{5,b}=& \varepsilon^{-2}p'(\varrho)\int_0^\tau\int_{\Omega_R}(\rho-\varrho)\dvr w_R+\varepsilon^{-2}p'(\varrho)\int_0^\tau\int_{\Omega_R}(\rho-\varrho)\Delta\Psi=\tilde J_{5,a}+\tilde J_{5,b}.
\end{split}
\end{equation*}
By \eqref{EpsInd}$_2$ and \eqref{CorrEst} we deduce 
\begin{equation}\label{J5AEst}
|\tilde J_{5,a}|\leq \varepsilon^{-1}\|\rho-\varrho\|_{L^\infty(0,T;L^2(\Omega_R))}\varepsilon^{-1}\|w_R\|_{L^1(0,T;W^{1,2}(\Omega_R))}\leq \frac{c(D,T)}{\varepsilon R}.
\end{equation}
Using \eqref{AcSys}$_2$ we deduce that $\tilde J_{5,b}$ cancels out $I_{3,d}$ from \eqref{I3Split}. 
Collecting estimates \eqref{J1Est}, \eqref{J2ABCEst}, \eqref{J2DEst}, \eqref{J3Est},\eqref{J567Est}, \eqref{I2Est}, \eqref{I3AEst},\eqref{I3CEst}, \eqref{I4AEst}, \eqref{I5AEst} and \eqref{J5AEst} we obtain 
\begin{equation}\label{R1Est}
\begin{split}
\int_0^\tau\mathcal R_1(t)\dt\leq &c(D,T)(1+R^{-2})\int_0^\tau\mathcal E(\rho,u|r,U)+\frac{1}{4}\varepsilon^{-2}\int_0^\tau\int_{\Omega_R}p(\rho)b(\rho)\\&+c(D,T)\left(\varepsilon^{\alpha}+R^{-1}+(\varepsilon R)^{-1}+\nu\right)\\
&+\frac{\nu}{2}\int_0^\tau\int_{\Omega_R}\mathbb S(\nabla(u-U))\cdot\nabla(u-U)
\end{split}
\end{equation}
for any $\alpha\in(0,1)$.
For estimates of the terms in $\mathcal{R}_2$ we need some preparations. First, it follows from assumption \eqref{PressConstr} and \eqref{PresPot} that for any $\gamma>0$
\begin{equation}
\lim_{s\to\overline\rho_-}\frac{p(s)}{(b(s))^\gamma}=\lim_{s\to\overline\rho_-}\frac{P(s)}{(b(s))^\gamma}=\lim_{s\to\overline\rho_-}\frac{p(s)}{(b'(s))^\beta}=\lim_{s\to\overline\rho_-}\frac{P(s)}{(b(s))^{\beta-1}}=+\infty.
\end{equation}
The latter results imply that for $\gamma\geq 1$
\begin{equation}\label{BPressEst}
\int_{\Omega_R} |b(\rho)|^\gamma=\int_{\{\rho>\overline\rho-\alpha_1\}}|b(\rho)|^\gamma\leq c\int_{\{\rho>\overline\rho-\alpha_1\}}P(\rho)\leq c \int_{\{\rho>\overline\rho-\alpha_1\}}\left(P(\rho)-P(r)-P'(r)(\rho-r)\right)
\end{equation}
with $\alpha_1$ from Lemma \ref{Lem:PointPPotEst}. 
Moreover, for any $\beta_0\in[2,\beta]$ we have
\begin{equation}
\begin{split}
\int_{\Omega_R}|b'(\rho)|^{\beta_0-1}\leq& c\int_{\{\rho>\overline\rho-\alpha_1\}}P(\rho)\leq c\int_{\{\rho>\overline\rho-\alpha_1\}}\left(P(\rho)-P(r)-P'(r)(\rho-r)\right),\\
\int_{\Omega_R}|b'(\rho)|^{\beta_0}\leq& c\int_{\Omega_R}p(\rho).
\end{split}
\end{equation}
Using \eqref{BPressEst} with $\gamma=1$ we conclude
\begin{equation}\label{PresMeanValEst}
\begin{split}
I_6\leq& c\int_0^\tau\frac{1}{|\Omega_R|}\int_{\Omega_R}p(\rho)\varepsilon^{-2}\int_{\Omega_R}\left(P(\rho)-P(r)-P'(r)(\rho-r)\right)\\
\leq& c\int_0^\tau\frac{1}{|\Omega_R|}\int_{\Omega_R}p(\rho) \mathcal E(\rho, u|r,U)(t)
\end{split}
\end{equation}
In order to treat the second term in $\mathcal R_2$ we adopt the computations from \cite[Section 4.5]{FeLuNo18} in the following way. First, we write
\begin{align*}
 I_7=&
-\int_0^\tau\int_{\Omega_R}\rho(u-U)\otimes(u-U)\cdot\nabla\mathcal B \left(b(\rho)-\frac{1}{|\Omega_R|}\int_{\Omega_R}b(\rho)\right)\\
&-\int_0^\tau\int_{\Omega_R}\rho U\otimes (u-U)\cdot\nabla \mathcal B \left(b(\rho)-\frac{1}{|\Omega_R|}\int_{\Omega_R}b(\rho)\right)\\
&-\int_0^\tau\int_{\Omega_R}\rho(u-U)\otimes U\cdot \nabla \mathcal B \left(b(\rho)-\frac{1}{|\Omega_R|}\int_{\Omega_R}b(\rho)\right)\\
&-\int_0^\tau\int_{\Omega_R}(\rho-r)U\otimes U\cdot \nabla \mathcal B \left(b(\rho)-\frac{1}{|\Omega_R|}\int_{\Omega_R}b(\rho)\right)\\
&-\int_0^\tau\int_{\Omega_R} rU\otimes U\cdot \nabla \mathcal B \left(b(\rho)-\frac{1}{|\Omega_R|}\int_{\Omega_R}b(\rho)\right)\\
=&I_{7,a}+I_{7,b}+I_{7,c}+I_{7,d}+I_{7,e}.
\end{align*}
Using the H\"older and Young inequalities, the Sobolev embedding, \eqref{KornIn}  and Lemma \ref{Lem:Bog} we obtain
\begin{align*}
|I_{7,a}|\leq& \int_0^\tau\|\rho(u-U)\|_{L^2(\Omega_R)}\|u-U\|_{L^6(\Omega_R)}\left\|\nabla \mathcal B \left(b(\rho)-\frac{1}{|\Omega_R|}\int_{\Omega_R}b(\rho)\right)\right\|_{L^3(\Omega_R)}\\
\leq&c\nu^{-1}\int_0^\tau\|\sqrt\rho(u-U)\|^2_{L^2(\Omega_R)}\|b(\rho)\|^2_{L^3(\Omega_R)}+\frac{\nu}{16}\int_0^\tau\int_{\Omega_R}\mathbb S(\nabla(u-U))\cdot\nabla(u-U).
\end{align*}
Hence using \eqref{BPressEst} and \eqref{PPotEpsEst} we conclude
\begin{equation}\label{I7AEst}
\begin{split}
|I_{7,a}|\leq& c \nu^{-1}\int_0^\tau \left(\int_{\Omega_R} P(\rho)\right)^\frac{2}{3}\mathcal E(\rho, u|r, U)+\frac{\nu}{16}\int_0^\tau\int_{\Omega_R}\mathbb S(\nabla(u-U))\cdot\nabla(u-U)\\
\leq& c\varepsilon^\frac{4}{3}\nu^{-1}\int_0^\tau \mathcal E(\rho,u|r,U)+\frac{\nu}{16}\int_0^\tau\int_{\Omega_R}\mathbb S(\nabla(u-U))\cdot\nabla(u-U).
\end{split}
\end{equation}
Next, \eqref{BPressEst}, \eqref{PPotEpsEst} and Lemma \ref{Lem:Bog} imply
\begin{equation}\label{I7BCEst}
\begin{split}
|I_{7,b}|+|I_{7,c}|\leq& \overline\rho\int_0^\tau\|U\|_{L^\infty(\Omega_R)}\|\sqrt\rho(u-U)\|^2_{L^2(\Omega_R)}+\int_0^\tau\|U\|_{L^\infty(\Omega_R)}\left\|\nabla \mathcal B\left(b(\rho)-\frac{1}{|\Omega_R|}\int_{\Omega_R}b(\rho)\right)\right\|^2_{L^2(\Omega_R)}\\
\leq&c\int_0^\tau \|U\|_{L^\infty(\Omega_R)}(\|\sqrt\rho(u-U)\|^2_{L^2(\Omega_R)}+\|b(\rho)\|^2_{L^2(\Omega_R)})\\
\leq &c \int_0^\tau \|U\|_{L^\infty(\Omega_R)}\left(\|\sqrt\rho(u-U)\|^2_{L^2(\Omega_R)}+\int_{\Omega_R}(P(\rho)-P(r)-P'(r)(\rho-r))\right)\\
\leq &c(D,T)(1+\varepsilon^2)(1+R^{-2}) \int_0^\tau\mathcal E(\rho,u|r,U).
\end{split}
\end{equation}
Applying the Young inequality and \eqref{PPotEst} it follows that
\begin{equation}\label{I7DEst}
\begin{split}
|I_{7,d}|\leq& \int_0^\tau \|U\|^2_{L^\infty(\Omega)}\left(\|\rho-r\|^2_{L^2(\Omega_R)}+\left\|\nabla \mathcal B\left(b(\rho)-\frac{1}{|\Omega_R|}\int_{\Omega_R}b(\rho)\right)\right\|^2_{L^2(\Omega_R)}\right)\\
\leq &c \int_0^\tau \|U\|^2_{L^\infty(\Omega_R)}\left(\int_{\Omega_R}(P(\rho)-P(r)-P'(r)(\rho-r))+\|b(\rho)\|^2_{L^2(\Omega_R)}\right)\\
\leq &c(D,T)\varepsilon^2(1+R^{-4})\int_0^\tau\mathcal E(\rho,u|r,U).
\end{split}
\end{equation}
Then we estimate by the Sobolev embedding and Lemma \ref{Lem:Bog}
\begin{equation}\label{I7EEst}
\begin{split}
|I_{7,e}|\leq &c\int_0^\tau\|r\|_{L^\infty(\Omega_R)}\|U\|^2_{L^4(\Omega_R)}\|b(\rho)\|_{L^2(\Omega_R)}\\
\leq &c\varepsilon^2\int_0^\tau (\varrho+\varepsilon_0\|s\|_{L^\infty(\Omega_R)})^2(\|v\|_{L^4(\Omega_R)}+\|\nabla \Psi\|_{L^4(\Omega_R)}+\|w_R\|_{L^4(\Omega_R)})^4+c\varepsilon^{-2}\int_0^\tau\|b(\rho)\|^2_{L^2(\Omega_R)}\\ \leq &c(D,T,\varepsilon_0)\varepsilon^2+c\int_0^\tau\mathcal E(\rho,u|r,U).
\end{split}
\end{equation}
In order to handle the next term of $\mathcal R_2$ we write 
\begin{equation*}
\begin{split}
I_8
=&\nu\int_0^\tau\int_{\Omega_R}\left(\mathbb S(\nabla u)-\mathbb S(\nabla U)\right)\cdot\nabla\mathcal B\left(b(\rho)-\frac{1}{|\Omega_R|}\int_{\Omega_R}b(\rho)\right)\\
&+\nu\int_0^\tau\int_{\Omega_R}\mathbb S(\nabla U)\cdot\nabla\mathcal B\left(b(\rho)-\frac{1}{|\Omega_R|}\int_{\Omega_R}b(\rho)\right)\\
=&I_{8,a}+I_{8,b}.
\end{split}
\end{equation*}
Using the H\"older and Young inequalities, the structure of the tensor $\mathbb{S}(\nabla u)$, Lemma \ref{Lem:Bog} and \eqref{PPotEst} we deduce
\begin{equation}\label{I8AEst}
\begin{split}
|I_{8,a}&|\leq \nu\int_0^\tau\|\left(\mathbb S(\nabla u)-\mathbb S(\nabla U)\right)\|_{L^2(\Omega_R)}\left\|\nabla \mathcal B\left(b(\rho)-\frac{1}{|\Omega_R|}\int_{\Omega_R}b(\rho)\right)\right\|_{L^2(\Omega_R)}\\
&\leq \frac{\nu}{16}\int_0^\tau\int_{\Omega_R}\left(\mathbb S(\nabla u)-\mathbb S(\nabla U)\right)\cdot\nabla(u-U)+c\nu\int_0^\tau\|b(\rho)\|^2_{L^2(\Omega_R)}\\
&\leq \frac{\nu}{16}\int_0^\tau\int_{\Omega_R}\left(\mathbb S(\nabla u)-\mathbb S(\nabla U)\right)\cdot\nabla(u-U)+c\varepsilon^{2}\nu\int_0^\tau \mathcal E(\rho,u|r,U).
\end{split}
\end{equation}
Folowing the arguments used in \eqref{I7EEst} we get 
\begin{equation}\label{I8BEst}
\begin{split}
|I_{8,b}|=&\left|\nu\int_0^\tau\int_{\Omega_R}\dvr \mathbb S(\nabla U)\cdot\mathcal B\left(b(\rho)-\frac{1}{|\Omega_R|}\int_{\Omega_R}b(\rho)\right)\right|\leq c\nu\int_0^\tau\|U\|_{W^{2,2}(\Omega_R)}\|b(\rho)\|_{L^2(\Omega_R)}\\
\leq& c(D,T)\varepsilon^2\nu^2(1+R^{-2})+c\varepsilon^{-2}\int_0^\tau\|b(\rho)\|^2_{L^2(\Omega_R)}
\leq c(D,T)\varepsilon^2\nu^2(1+R^{-2})+c\int_0^\tau\mathcal E(\rho,u|r,U).
\end{split}
\end{equation}
Next, we write
\begin{equation*}
\begin{split}
I_9=&\int_0^\tau\int_{\Omega_R} \rho u\cdot \mathcal B(\dvr(b(\rho)(u-U)))\\
&+\int_0^\tau\int_{\Omega_R} \rho(u-U)\cdot\mathcal B(\dvr(b(\rho)U))\\
&+\int_0^\tau\int_{\Omega_R} (\rho-r)U\cdot\mathcal B(\dvr(b(\rho)U))\\
&+\int_0^\tau\int_{\Omega_R} rU\cdot\mathcal B(\dvr(b(\rho)U))=I_{9,a}+I_{9,b}+I_{9,c}+I_{9,d}.
\end{split}
\end{equation*}
By the H\"older and Young inequalities, Lemma \ref{Lem:Bog}, the fact that $\|\dvr(b(\rho)(u-U))\|_{(\dot{W}^{1,\frac{3}{2}(\Omega)})'}\leq \|b(\rho)(u-U)\|_{L^\frac{3}{2}(\Omega)}$, \eqref{BPressEst}, \eqref{PPotEpsEst}, the Sobolev embedding and the Korn inequality it follows that
\begin{equation}\label{I9AEst}
\begin{split}
|I_{9,a}|\leq& c\int_0^\tau\|\rho u\|_{L^2(\Omega_R)}\|b(\rho)(u-U)\|_{L^2(\Omega_R)}\\
\leq &c\nu^{-1}\|\rho u\|^2_{L^\infty(0,T;L^2(\Omega_R))}\int_0^\tau \|b(\rho)\|^2_{L^4(\Omega_R)}+\sigma\nu\int_0^\tau\|u-U\|^2_{L^4(\Omega_R)}\\
\leq& c\nu^{-1}\int_0^\tau\left(\int_{\{\rho(t,\cdot)>\overline\rho-\alpha\}}P(\rho)\right)^\frac{1}{2}+\frac{\nu}{16}\int_0^\tau\int_{\Omega_R}\left(\mathbb S(\nabla u)-\mathbb S(\nabla U)\right)\cdot\nabla(u-U)\\
\leq& c \varepsilon\nu^{-1}+\frac{\nu}{16}\int_0^\tau\int_{\Omega_R}\left(\mathbb S(\nabla u)-\mathbb S(\nabla U)\right)\cdot\nabla(u-U).
\end{split}
\end{equation}
We similarly deduce 
\begin{equation}\label{I9BEst}
\begin{split}
|I_{9,b}|\leq & \overline\rho\int_0^\tau\int_{\Omega_R}\rho|u-U|^2+c\int_0^\tau\|b(\rho)U\|^2_{L^2(\Omega_R)}\\ \leq& c \int_0^\tau\mathcal E(\rho,u|r,U)+c\varepsilon^2\int_0^\tau\|U\|^2_{L^\infty(\Omega_R)}\varepsilon^{-2}\int_{\Omega_R}(P(\rho)-P(r)-P'(r)(\rho-r))\\
\leq & c(D,T)\int_0^\tau (1+\varepsilon^2(1+R^{-2}))\mathcal E(\rho,u|r,U),
\end{split}
\end{equation}
\begin{equation}\label{I9CEst}
|I_{9,c}|\leq \int_0^\tau\|U\|^2_{L^\infty(\Omega_R)}(\rho-r)^2+c\int_0^\tau\|U\|^2_{L^\infty(\Omega_R)}\|b(\rho)\|^2_{L^2(\Omega_R)}
\leq c(1+\varepsilon^2)(1+R^{-2})\int_0^\tau\mathcal E(\rho,u|r,U)
\end{equation}
and
\begin{equation}\label{I9DEst}
\begin{split}
|I_{9,d}|\leq& c\int_0^\tau \|rU\|_{L^2(\Omega_R)}\|b(\rho)U\|_{L^2(\Omega_R)}\leq c\varepsilon^2\int_0^\tau \|r\|^2_{L^\infty(\Omega_R)}\|U\|^2_{L^2(\Omega_R)}+c\varepsilon^{-2}\int_0^\tau\|U\|^2_{L^\infty(\Omega_R)}\|b(\rho)\|^2_{L^2(\Omega_R)}\\
\leq& c(D,T)\varepsilon^2(1+R^{-4})\int_0^\tau(\varrho+\varepsilon_0\|s\|_{L^\infty(\Omega_R)})^2 +c(D,T)(1+R^{-2})\int_0^\tau\mathcal E(\rho,u|r,U).
\end{split}
\end{equation}
Next, we write
\begin{equation*}
\begin{split}
I_{10}=&\int_0^\tau\int_{\Omega_R} \rho u\cdot \mathcal B\left((b'(\rho)\rho-b(\rho))\dvr U-\frac{1}{|\Omega_R|}\int_{\Omega_R}(b'(\rho)\rho-b(\rho))\dvr U\right)\\
&+\int_0^\tau\int_{\Omega_R} \rho U\cdot \mathcal B\left((b'(\rho)\rho-b(\rho))\dvr (u-U)-\frac{1}{|\Omega_R|}\int_{\Omega_R}(b'(\rho)\rho-b(\rho))\dvr (u-U)\right)\\
&+\int_0^\tau\int_{\Omega_R} \rho (u-U)\cdot \mathcal B\left((b'(\rho)\rho-b(\rho))\dvr (u-U)-\frac{1}{|\Omega_R|}\int_{\Omega_R}(b'(\rho)\rho-b(\rho))\dvr (u-U)\right)\\
=&I_{10,a}+I_{10,b}+I_{10,c}.
\end{split}
\end{equation*}
By Lemma \ref{Lem:Bog}, \eqref{EpsInd}$_1$ and \eqref{BPressEst} it follows that 
\begin{equation}\label{I10AEst}
\begin{split}
|I_{10,a}|\leq& c\int_0^\tau\|\rho u\|_{L^2(\Omega_R)}\|(b'(\rho)\rho-b(\rho))\dvr U\|_{L^2(\Omega_R)}\\
\leq& c\|\sqrt\rho u\|_{L^\infty(0,T;L^2(\Omega_R))}\int_0^\tau \|U\|_{W^{1,\infty}(\Omega_R)}\left(\|b'(\rho)\|_{L^2(\Omega_R)}+\|b(\rho)\|_{L^2(\Omega_R)}\right)\\
\leq& c\varepsilon^2\int_0^\tau\|U\|^2_{W^{1,\infty}(\Omega_R)}+\varepsilon^{-2}\int_0^\tau\left(\|b'(\rho)\|^2_{L^2(\Omega_R)}+\|b(\rho)\|^2_{L^2(\Omega_R)}\right)\\
\leq& c(D,T)\varepsilon^2(1+R^{-4})+c\int_0^\tau\mathcal E(\rho,u|r,U).
\end{split}
\end{equation}
The Sobolev embedding $W^{1,\frac{6}{5}}_0(\Omega_R)$ into $L^1(\Omega_R)$, Lemma \ref{Lem:Bog}, \eqref{BPressEst} and \eqref{PPotEpsEst} yield
\begin{equation}\label{I10BEst}
\begin{split}
|I_{10,b}|\leq& \overline\rho\int_0^\tau\|U\|_{L^\infty(\Omega_R)}\left\|\mathcal B\left((b'(\rho)\rho-b(\rho))\dvr(u-U)-\frac{1}{|\Omega_R|}\int_{\Omega_R}(b'(\rho)\rho-b(\rho))\dvr(u-U)\right)\right\|_{W^{1,\frac{6}{5}}(\Omega_R)}\\
\leq& c\int_0^\tau\|U\|_{L^\infty(\Omega_R)}\|(b'(\rho)\rho-b(\rho))\dvr(u-U)\|_{L^\frac{6}{5}(\Omega_R)}\\
\leq& c\int_0^\tau\|U\|_{L^\infty(\Omega_R)}\|(b'(\rho)\rho-b(\rho))\|_{L^3(\Omega_R)}\|\nabla(u-U)\|_{L^2(\Omega_R)}\\
\leq& c\nu^{-1}\int_0^\tau\|U\|^2_{L^\infty(\Omega_R)}(\|b'(\rho)\|^2_{L^3(\Omega_R)}+\|b(\rho)\|^2_{L^3(\Omega_R)})+\frac{\nu}{16}\int_0^\tau\int_{\Omega_R}\mathbb S(\nabla(u-U))\cdot\nabla(u-U)\\ 
\leq& c\nu^{-1}\int_0^\tau\|U\|^2_{L^\infty(\Omega_R)}\left(\int_{\{\rho(t,\cdot)>\overline\rho-\alpha_1\}}P(\rho)\right)^\frac{2}{3}+\frac{\nu}{16}\int_0^\tau\int_{\Omega_R}\left(\mathbb S(\nabla u)-\mathbb S(\nabla U)\right)\cdot\nabla(u-U)\\ 
\leq& c(D,T)\varepsilon^\frac{4}{3}\nu^{-1}(1+R^{-4})+\frac{\nu}{16}\int_0^\tau\int_{\Omega_R}\left(\mathbb S(\nabla u)-\mathbb S(\nabla U)\right)\cdot\nabla(u-U)
\end{split}
\end{equation}
as well as
\begin{equation}\label{I10CEst}
\begin{split}
|I_{10,c}|\leq& c\int_0^\tau \|\sqrt\rho(u-U)\|_{L^2(\Omega_R)}\|\mathcal B((b'(\rho)\rho-b(\rho))\dvr(u-U))\|_{W^{1,\frac{6}{5}}(\Omega_R)}\\
\leq&c\int_0^\tau \|\sqrt\rho(u-U)\|_{L^2(\Omega_R)}\|(b'(\rho)\rho-b(\rho))\dvr(u-U)\|_{L^{\frac{6}{5}}(\Omega_R)}\\
\leq& c\nu^{-1}\int_0^\tau  \|\sqrt\rho(u-U)\|^2_{L^2(\Omega_R)}\left(\|b'(\rho)\|^2_{L^3(\Omega_R)}+\|b'(\rho)\|^2_{L^3(\Omega_R)}\right)+c\nu\int_0^t\|\nabla(u-U)\|^2_{L^2(\Omega_R)}\\
\leq&c\nu^{-1}\int_0^\tau \|\sqrt\rho(u-U)\|^2_{L^2(\Omega_R)}\left(\int_{\{\rho(t,\cdot)>\overline\rho-\alpha_1\}}P(\rho)\right)^\frac{2}{3}+\frac{\nu}{16}
\int_0^\tau\int_{\Omega_R}\left(\mathbb S(\nabla u)-\mathbb S(\nabla U)\right)\cdot\nabla(u-U)\\
\leq&c\varepsilon^\frac{4}{3}\nu^{-1}\int_0^\tau \mathcal E(\rho,u|r,U)+\frac{\nu}{16}
\int_0^\tau\int_{\Omega_R}\left(\mathbb S(\nabla u)-\mathbb S(\nabla U)\right)\cdot\nabla(u-U).
\end{split}
\end{equation}
Collecting estimates \eqref{PresMeanValEst}--\eqref{I10CEst} we conclude 
\begin{equation}\label{R2Est}
\begin{split}
\int_0^\tau&\mathcal R_2(t)\dt\\
\leq& c(D,T)\int_0^\tau \Bigl(\frac{1}{|\Omega_R|}\int_{\Omega_R}p(\rho)+\varepsilon^\frac{4}{3}\nu^{-1}(1+R^{-4})+(1+\varepsilon^2)(1+R^{-2})
+\varepsilon^2\nu+1\Bigr)\mathcal E(\rho,u|r,U)(t)\dt\\
&+\frac{5\nu}{16}\int_0^\tau\int_{\Omega_R}\left(\mathbb S(\nabla u)-\mathbb S(\nabla U)\right)\cdot\nabla(u-U)+c(D,T)(\varepsilon^2+\varepsilon^2\nu^2(1+R^{-2})+\varepsilon\nu^{-1}).
\end{split}
\end{equation}
Similarly as berfore we deduce
\begin{equation}\label{R3Est}
\begin{split}
\mathcal{R}_3(t)\leq& c\|\rho u\|_{L^\infty(0,T;L^2(\Omega_R))}\|b(\rho)(t,\cdot)\|_{L^2(\Omega_R)}+c\|\rho_0u_0\|_{L^2(\Omega_R)}\|b(\rho_0)\|_{L^2(\Omega_R)}\\
\leq& c(D,T)\varepsilon^2+\sigma\varepsilon^{-2}\int_{\{\rho(t,\cdot)>\overline\rho-\alpha_1\}}|b(\rho)|^2+\sigma\varepsilon^{-2}\int_{\{\rho_0(\cdot)>\overline\rho-\alpha_1\}}|b(\rho_0)|^2\\
\leq&  c(D,T)¨\varepsilon^2+\frac{1}{4}\mathcal E(\rho,u|r,U)(t)+\frac{1}{4}\mathcal E(\rho_0,u_0|r(0,\cdot),U(0,\cdot)),
\end{split}
\end{equation}
where $\sigma$ was suitably chosen and \eqref{EpsInd}$_1$ along with assumption \eqref{InitDBound} were taken into account. Notice that the generic constants appeaing in estimates \eqref{R2Est} and \eqref{R3Est} contain also positive powers of the expression $\frac{\diam(\Omega_R)}{R}$ that is bounded with respect to $R$ due to assumption \eqref{BoundOmLimits}.
Going back to \eqref{REIEps}, which we combine with estimates \eqref{R1Est}, \eqref{R2Est} and \eqref{R3Est}, it follows by the Gronwall lemma that
\begin{equation}\label{GRIneq}
\mathcal E(\rho,u|r,U)(t)\leq \gamma\exp\left(\int_0^t\delta(s)\ds\right)\ t\in[0,T],
\end{equation}
where 
\begin{align*}
\gamma=&c\mathcal E(\rho_0,u_0|r(0,\cdot),U(0,\cdot))+c(D,T)(\varepsilon^\alpha+R^{-1}+(\varepsilon R)^{-1}+\nu+\varepsilon^2\nu^2(1+R^{-2})+\varepsilon\nu^{-1}),\\
\delta(\tau)=& c(D,T)\Bigl(\frac{1}{|\Omega_R|}\int_{\Omega_R}p(\rho)+\varepsilon^\frac{4}{3}\nu^{-1}(1+R^{-4})+(1+\varepsilon^2)(1+R^{-2})+\varepsilon^2\nu+1\Bigr).
\end{align*}
In order to proceed we estimate the quantity $|\Omega_R|^{-1}\int_0^\tau\int_{\Omega_R}p(\rho)$.
We begin by repeating the part of the proof of Theorem \ref{Thm:Main}, namely the proof of \eqref{PressRenId}, as $b(s)=s$ satisfies the assumptions of Theorem \ref{Thm:Main}, to get 
\begin{equation}\label{BogRhoTest}
\varepsilon^{-2}\int_0^\tau\int_{\Omega_R}p(\rho)\left(\rho-\frac{1}{|\Omega_R|}\int_{\Omega_R}\rho\right)=\sum_{i=1}^4 I_i,
\end{equation}
where 
\begin{align*}
I_1&=-\int_0^\tau\int_{\Omega_R}\rho u\otimes u\cdot\nabla \mathcal B\left(\rho-\frac{1}{|\Omega_R|}\int_{\Omega_R}\rho\right),\\
I_2&=\int_0^\tau\int_{\Omega_R}\nu\mathbb{S}(\nabla u)\cdot\nabla\mathcal B\left(\rho-\frac{1}{|\Omega_R|}\int_{\Omega_R}\rho\right),\\
I_3&=\int_0^\tau\int_{\Omega_R}\rho u\cdot \mathcal B\left(\dvr(\rho u)-\frac{1}{|\Omega_R|}\int_{\Omega_R}\dvr(\rho u)\right),\\
I_4&=\int_{\Omega_R}\rho u\cdot\mathcal B \left(\rho-\frac{1}{|\Omega_R|}\int_{\Omega_R}\rho\right)-\int_{\Omega_R}\rho_0u_0\mathcal B\left(\rho_0-\frac{1}{|\Omega_R|}\int_{\Omega_R}\rho_0\right).
\end{align*}
By Lemma \ref{Lem:Bog} we conclude 
\begin{equation}\label{BogDens}
\left\|\mathcal B\left(\rho-\frac{1}{|\Omega_R|}\int_{\Omega_R}\rho\right)\right\|_{L^p(0,T;W^{1,q}(\Omega_R))}\leq c\|\rho\|_{L^p(0,T;L^q(\Omega_R))}\text{ for } p\in[1,\infty], q\in(1,\infty)
\end{equation}
with the constant $c$ dependentr also on the term of the form $\frac{\diam( \Omega_R)}{R}$ that is bounded with respect to $R$ due to the assumption \eqref{BoundOmLimits}. Using the fact that $\rho\leq\overline\rho$, \eqref{EpsInd}$_{1,3}$, the Sobolev embedding, \eqref{BogDens} and Lemma \ref{Lem:Bog}  we conclude
\begin{align*}
|I_1|&\leq c \overline\rho\|u\|^2_{L^2(0,T;L^6(\Omega_R))}\|\rho\|_{L^\infty(0,T;L^\frac{3}{2}(\Omega_R))}\leq c\nu^{-1}|\Omega_R|^\frac{2}{3},\\
|I_2|&\leq c\nu\|\mathbb S(\nabla u)\|_{L^2(0,T;L^2(\Omega_R))}\|\rho\|_{L^2(0,T;L^2(\Omega_R))}\leq c\nu^\frac{1}{2}T^\frac{1}{2}|\Omega_R|^\frac{1}{2},\\
|I_3|&\leq c\|\rho u\|^2_{L^2(0,T;L^2(\Omega_R))}\leq c,\\
|I_4|&\leq c\|\rho u\|_{L^\infty(0,T;L^2(\Omega_R))}\|\rho\|_{L^\infty(0,T;L^2(\Omega_R))}+c\|\rho_0 u_0\|_{L^2(\Omega_R)}\|\rho_0\|_{L^2(\Omega_R)}\leq c|\Omega_R|^\frac{1}{2}.
\end{align*}
Therefore, we get from \eqref{BogRhoTest} as $|\Omega_R|\geq cR^{3}$ by \eqref{BoundOmLimits}
\begin{equation}
\frac{1}{|\Omega_R|}\int_0^\tau\int_{\Omega_R}p(\rho)\left(\rho-\frac{1}{|\Omega_R|}\int_{\Omega_R}\rho\right)\leq c(D,T)\varepsilon^{2}(\nu^{-1}R^{-1}+c\nu^\frac{1}{2}R^{-\frac{3}{2}}+1).
\end{equation}
Next, we denote
\begin{equation*}
m_{\varepsilon,R}=\frac{1}{|\Omega_R|}\int_{\Omega_R}\rho_{0,\varepsilon}=\varrho+\frac{\varepsilon}{|\Omega_R|}\int_{\Omega_R}\rho^{(1)}_{0,\varepsilon}.
\end{equation*} By \eqref{EpsZCh} it follows that for any $\varepsilon<\varepsilon_0$
\begin{equation}
\frac{1}{|\Omega_R|}\int_{\Omega_R} \rho(t,x)\dx=m_{\varepsilon,R}<\overline\rho.
\end{equation}
Therefore we have 
\begin{equation}
\begin{split}
\frac{1}{|\Omega_R|}\int_0^\tau\int_{\Omega_R}p(\rho)\left(\rho-\frac{1}{|\Omega_R|}\int_{\Omega_R}\rho\right)=&\frac{1}{|\Omega_R|}\int_{\left\{\rho\leq\frac{\overline\rho+m_{\varepsilon,R}}{2}\right\}} p(\rho)\left(\rho-\frac{1}{|\Omega_R|}\int_{\Omega_R}\rho\right)\\
&¨+\frac{1}{|\Omega_R|}\int_{\left\{\rho>\frac{\overline\rho+m_{\varepsilon,R}}{2}\right\}} p(\rho)\left(\rho-\frac{1}{|\Omega_R|}\int_{\Omega_R}\rho\right)=J_1+J_2.
\end{split}
\end{equation}
Next, we conclude
\begin{equation}
J_1\leq \max_{s\in\left[0,\frac{1}{2}(\overline\rho+\varrho+\varepsilon_0D)\right]}p(s)\frac{\overline\rho-m_{\varepsilon,R}}{2}
\end{equation}
by the assumed continuity of $p$. On the other hand we get
\begin{equation}
J_2\geq \frac{\overline\rho-m_{\varepsilon,R}}{2|\Omega_R|}\int_{\left\{\rho>\frac{\overline\rho+m_{\varepsilon,R}}{2}\right\}}p(\rho)\geq \frac{\overline\rho-m_{\varepsilon,R}}{2|\Omega_R|}\int_{\left\{\rho>\frac{1}{2}(\overline\rho+\varrho+\varepsilon_0D)\right\}}p(\rho).
\end{equation}
Since $\inf_{\varepsilon<\varepsilon_0}(\overline\rho-m_{\varepsilon,R})\geq\overline\rho-\varrho-\varepsilon_0 D>0$ by \eqref{EpsZCh}, it follows  that 
\begin{align*}
\frac{1}{|\Omega_R|}\int_0^\tau\int_{\Omega_R}p(\rho)&=\frac{1}{|\Omega_R|}\int_{\left\{\rho\leq\frac{\overline\rho+\varrho+\varepsilon_0D}{2}\right\}}p(\rho)+\frac{1}{|\Omega_R|}\int_{\left\{\rho>\frac{\overline\rho+\varrho+\varepsilon_0D}{2}\right\}}p(\rho)\\
&\leq \max_{s\in[0,\frac{\overline\rho+\varrho+\varepsilon_0D}{2}]}p(s)+\frac{2}{\overline\rho-\varrho-\varepsilon_0 D}J_2\\
&\leq \max_{s\in[0,\frac{\overline\rho+\varrho+\varepsilon_0D}{2}]}p(s)+ \frac{2}{\overline\rho-\varrho-\varepsilon_0 D}\left(\frac{1}{|\Omega_R|}\left|\int_0^\tau\int_{\Omega_R} p(\rho)\left(\rho-\frac{1}{|\Omega_R|}\rho)\right)\right|+|J_1|\right)\\
&\leq\frac{ c\varepsilon^2}{\overline\rho-\varrho-\varepsilon_0 D}(\nu^{-1}R^{-1}+c\nu^\frac{1}{2}R^{-\frac{3}{2}}+1)+c\max_{s\in[0,\frac{\overline\rho+\varrho+\varepsilon_0D}{2}]}p(s).
\end{align*}
Employing the latter inequality in the $\delta$--term of \eqref{GRIneq} we conclude \eqref{FinIneq} by Lemma \ref{Lem:PressPotEst} and the proof is finished.

\section{Appendix}
The ensuing lemma deals with renormalized solutions of the continuity equation. It collects versions of assertions \cite[Lemmas 6.9 and 6.11]{NovStr04} adopted for a function $b$ considered in this paper.
\begin{Lemma}
	Let $T>0$ and $\Omega\subset\eR^d$, $d\geq 2$, be a bounded Lipschitz domain. Let $\rho\in L^\infty(Q_T)$ be such that $0\leq \rho<\bar\rho$ a.e. in $Q_T$ and $\rho$ togrether with $u\in L^2(W^{1,2}_0(\Omega)^d)$ satisfy the continuity equation
	\begin{equation}\label{CEq}
	\tder \rho+\dvr(\rho u)=0\text{ in }\mathcal D'(Q_T)
	\end{equation}
	Let $b\in C^1([0,\bar\rho))$ be such that
	\begin{equation}\label{BRhoReg}
	b(\rho)\in L^{2}(Q_T), b'(\rho)\in L^2(Q_T)
	\end{equation}
	and 
	\begin{equation}\label{BMon}
	b, b'\text{ are nondecreasing on }[\bar\rho-\alpha_0,\bar\rho)\text{ for some }\alpha_0\in(0,\bar\rho).
	\end{equation}
	Let $\rho$ and $u$ be extended by zero in $(0,T)\times\left(\eR^d\setminus\Omega\right)$.
	\begin{enumerate}
		\item 
	 Then the continuity equation \eqref{CEq} holds in the sense of renormalized solutions
	\begin{equation}\label{BRCEq}
\tder b(\rho)+\dvr(b(\rho)u)+(b'(\rho)\rho-b(\rho))\dvr u=0\text{ in }\mathcal D'((0,T)\times\eR^d),
\end{equation}	 
\item Moreover, for $b_\alpha$ with $\alpha\in(0,\alpha_0)$ defined as
\begin{equation}\label{BADef}
b_\alpha(s)=\begin{cases}
b(s)&s\leq\bar\rho-\alpha,\\
b(\bar\rho-\alpha)&s>\bar\rho-\alpha
\end{cases}
\end{equation}
the renormalized continuity equation holds in the form
	\begin{equation}\label{BAlphaRCEq}
	\tder b_\alpha(\rho)+\dvr(b_\alpha(\rho)u)+(b'_\alpha(\rho)\rho-b_\alpha(\rho))\dvr u=0\text{ in }\mathcal D'((0,T)\times\eR^d),
	\end{equation}
	where we set $b'_\alpha(\rho)=0$ in $\{\rho=\bar\rho-\alpha\}$.
	\end{enumerate}
	\begin{proof}
		Applying the extension procedure from \cite[Lemma 6.8]{NovStr04} we get
		\begin{equation*}
		\tder \rho+\dvr(\rho u)=0\text{ in }\mathcal D'((0,T)\times\eR^d)
		\end{equation*}
		for the extensions $\rho\in L^\infty((0,T)\times\eR^d)$ and $u\in L^2(0,T;W^{1,2}_{loc}(\eR^d))$ of $\rho$ and $u$ from assumptions of the lemma by zero in $(0,T)\times\left(\eR^d\setminus\Omega\right)$.
		Regularizing the latter equation over the spatial variables by the usual mollifier $S_\varepsilon$ with $\varepsilon>0$ yields
		\begin{equation}\label{AEIdent}
			\tder S_\varepsilon (\rho)+\dvr(S_\varepsilon(\rho) u)=r_\varepsilon(\rho, u)\text{ a.e. in }(0,T)\times\eR^d,
		\end{equation}
		where 
		\begin{equation}\label{RemCnv}
			r_\varepsilon(\rho, u)=\dvr(S_\varepsilon (\rho) u-S_\varepsilon(\rho u))\rightarrow 0\text{ in }L^r(0,T;L^r_{loc}(\eR^d))\text{ for any }r<2,
		\end{equation} cf. \cite[Lemma 6.7]{NovStr04}. We observe that \begin{equation}\label{MolRhoBound}
		\|S_\varepsilon(\rho)\|_{L^\infty(Q_T)}\leq \|\rho\|_{L^\infty(Q_T)}<\bar\rho.
		\end{equation} 
		Hence $b(S_\varepsilon(\rho))$ is well defined in $(0,T)\times\eR^d$. We multiply \eqref{AEIdent} by $b'(S_\varepsilon(\rho))$ and obtain
		\begin{equation}\label{BMolIdent}
		\begin{split}
			&\tder b(S_\varepsilon (\rho)))+\dvr(b(S_\varepsilon(\rho)) u)+\left(b'(S_\varepsilon(\rho))S_\varepsilon(\rho)-b(S_\varepsilon(\rho))\right)\dvr u\\&=b'(S_\varepsilon(\rho))r_\varepsilon(\rho, u)\text{ a.e. in }(0,T)\times\eR^d.
		\end{split}
		\end{equation}	
		The approximating property 
		\begin{equation}\label{MolRhoCnv}
		S_\varepsilon(\rho)\rightarrow\rho\text{ in }L^q(0,T;L^q_{loc}(\eR^d))\text{ for any }q\in[1,\infty)
		\end{equation}
		implies the existence of a nonrelabeled subsequence $\{S_\varepsilon (\rho)\}$ such that $S_\varepsilon(\rho)\rightarrow \rho$ a.e. in $(0,T)\times\eR^d$. Taking into account the continuity of $b^{(j)}$, where $j\in\{0,1\}$ denotes the order of the derivative, we have $b^{(j)}(S_\varepsilon(\rho))\to b^{(j)}(\rho)$ a.e. in $(0,T)\times\eR^d$. Taking into consideration also \eqref{BMon} it follows that
		\begin{equation}\label{BMolRhoBound}
		|b^{(j)}(S_\varepsilon(\rho))|\leq h^j=\begin{cases}
		\max_{[0,\bar\rho-\alpha_0]} |b^{(j)}|&\rho \leq\bar\rho-\alpha_0\\
		|b^{(j)}(\rho)|&\rho>\bar\rho-\alpha_0.
		\end{cases}
		\end{equation}
		By \eqref{BRhoInt} we infer that $h^0(\rho)$ is an integrable majorant to $b(S_\varepsilon(\rho))$ and $h^1(\rho)\overline\rho$ to $b'(S_\varepsilon(\rho))S_\varepsilon(\rho)$.
		Hence we can apply the Lebesgue dominated convergence theorem to infer
		\begin{equation}\label{BMolRhoCnv}
		\begin{alignedat}{2}
		b(S_\varepsilon(\rho))&\rightarrow b(\rho)&&\text{ in }L^2(0,T;L^2(\Sigma)),\\
		b'(S_\varepsilon(\rho))&\rightarrow b'(\rho)&&\text{ in }L^2(0,T;L^2(\Sigma)).
		\end{alignedat}
		\end{equation}
		The latter convergences and \eqref{MolRhoCnv} imply 
		\begin{equation*}
		b'(S_\varepsilon(\rho))S_\varepsilon\rho-b(S_\varepsilon(\rho))\rightarrow b'(\rho)\rho-b(\rho)\text{ in }L^2(0,T;L^2(\Sigma))
		\end{equation*}	
		for any bounded domain $\Sigma\subset\eR^d$. Combining \eqref{BMolIdent}, \eqref{RemCnv}, \eqref{BMolRhoCnv} and the latter convergence one arrives at \eqref{BRCEq} and the first assertion of the lemma is proved.		
		In order to prove the second assertion, we begin with the proof of the following auxiliary identity
		\begin{equation}\label{AuxIdent}
		(\bar\rho-\alpha)\dvr u=0\text{ a.e. in }\{\rho=\bar\rho-\alpha\}.
		\end{equation}
		To this end we consider $b\in C^1_c((0,\infty))$ such that $b(s)=s$ in $[\tfrac{3}{4}(\bar\rho-\alpha),\bar\rho-\tfrac{3}{4}\alpha]$ and $b$, $b'$ are nondecreasing in $[\bar\rho-\tfrac{\alpha}{2},\bar\rho]$. We define
		$b^+_{\alpha,\varepsilon}=S_\frac{\varepsilon}{2}(b_{\alpha+\varepsilon})$, $b^-_{\alpha,\varepsilon}=S_\frac{\varepsilon}{2}(b_{\alpha-\varepsilon})$. Then we have as $\varepsilon\to 0_+$
		\begin{equation}\label{MolBSpec}
		\begin{alignedat}{2}
		b^+_{\alpha,\varepsilon}(s),b^-_{\alpha,\varepsilon}(s)&\to b_\alpha(s)\ &&s\in[0,\bar\rho] ,\\ (b^+_{\alpha,\varepsilon})'(s),(b^-_{\alpha,\varepsilon})'(s)&\to b'_\alpha(s)\ &&s\in[0,\bar\rho]\setminus\{\bar\rho-\alpha\},\\
		(b^+_{\alpha,\varepsilon})'(\bar\rho-\alpha)&\to 0,\\
		(b^-_{\alpha,\varepsilon})'(\bar\rho-\alpha)&\to 1.
		\end{alignedat}
		\end{equation}
		By the first assertion of the lemma we have
		\begin{equation*}
		\begin{split}
		\tder b^+_{\alpha,\varepsilon}(\rho)+\dvr(b^+_{\alpha,\varepsilon}(\rho)u)+(\rho (b^+_{\alpha,\varepsilon})'(\rho)-b^+_{\alpha,\varepsilon}(\rho))\dvr u=0\text{ in }\mathcal D'((0,T)\times\eR^d),\\
		\tder b^-_{\alpha,\varepsilon}(\rho)+\dvr(b^-_{\alpha,\varepsilon}(\rho)u)+(\rho (b^-_{\alpha,\varepsilon})'(\rho)-b^-_{\alpha,\varepsilon}(\rho))\dvr u=0\text{ in }\mathcal D'((0,T)\times\eR^d).
		\end{split}
		\end{equation*}
		Letting $\varepsilon\to 0_+$ and employing the convergences from \eqref{MolBSpec} we deduce by the Lebesgue dominated convergence theorem
		from the latter identities
		\begin{equation*}
		\begin{split}
		\tder b_\alpha(\rho)+\dvr(b_{\alpha}(\rho)u)+(\rho (b_{\alpha})'(\rho)\chi_{\{\rho\neq\bar\rho-\alpha\}}-b_{\alpha}(\rho))\dvr u=0\text{ in }\mathcal D'((0,T)\times \eR^d),\\ \tder b_{\alpha}(\rho)+\dvr(b_{\alpha}(\rho)u)+(\rho (b_\alpha)'(\rho)\chi_{\{\rho\neq \bar\rho-\alpha\}}+(\bar\rho-\alpha)\chi_{\{\rho=\bar\rho-\alpha\}}-b_\alpha(\rho))\dvr u=0\text{ in }\mathcal D'((0,T)\times\eR^d).
		\end{split}
		\end{equation*}
		Subtracting the latter equations we conclude \eqref{AuxIdent}.
		
		Next, we consider for fixed $\varepsilon<\alpha$ $S_\varepsilon (b_\alpha)$, the mollification of $b_\alpha$ extended by $b(\bar\rho-\alpha)$ in $(\bar\rho,\bar\rho+1]$ and by $0$ outside of $[0,\bar\rho+1]$. As $S_\varepsilon(b_\alpha)$ is constant in a vicinity of $\overline\rho$, it fulfills \eqref{BMon} and $S_\varepsilon(b_\alpha)(\rho)$ satisfies \eqref{BRhoInt}, it follows that 
		\begin{equation}\label{MolBalphEq}
		\tder S_\varepsilon(b_\alpha(\rho))+\dvr(S_\varepsilon(b_\alpha(\rho))u)+\left(\left(S_\varepsilon(b_\alpha)\right)'(\rho)\rho-S_\varepsilon(b_\alpha(\rho))\right)\dvr u=0\text{ in }\mathcal D'((0,T)\times\eR^d).
		\end{equation}
		Furthermore, we have as $\varepsilon\to 0_+$
		\begin{equation*}
		\begin{alignedat}{2}
		S_\varepsilon(b_\alpha)(s)&\to b_\alpha(s) &&\ s\in[0,\bar\rho],\\
		(S_\varepsilon(b_\alpha))'(s)&\to b'_\alpha(s) &&\ s\in[0,\bar\rho-\alpha)\cup(\bar\rho-\alpha,\bar\rho].
		\end{alignedat}
		\end{equation*}
		Hence we infer that as $\varepsilon\to 0_+$
		\begin{equation}\label{MOlBalRhoCnv}
		\begin{alignedat}{2}
		S_\varepsilon(b_\alpha)(\rho)&\to b_\alpha(\rho) &&\text{ a.e. in }(0,T)\times\eR^d,\\
		(S_\varepsilon(b_\alpha))'(\rho)&\to b'_\alpha(\rho) &&\text{ a.e. in } (0,T)\times\eR^d\setminus\{\rho=\bar\rho-\alpha\}.
		\end{alignedat}
		\end{equation}
		Employing \eqref{AuxIdent} we conclude deduce that $\rho(S_\varepsilon(b_\alpha))'(\rho)\dvr u=0$ a.e. in $\{\rho=\bar\rho-\alpha\}$.
		Using the convergences from \eqref{MOlBalRhoCnv}, the uniform bounds with respect to $\varepsilon$ on $S_\varepsilon(b_\alpha), S_\varepsilon(b_\alpha)'$  and the Lebesgue dominated convergence theorem we pass to the limit $\varepsilon\to 0_+$ in \eqref{MolBalphEq} to conclude \eqref{BAlphaRCEq}.
	\end{proof}
\end{Lemma}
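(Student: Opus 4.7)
The plan is to establish both parts by the DiPerna--Lions mollification technique, with special attention to the kink of $b_\alpha$ at the value $\bar\rho-\alpha$. As a preliminary step I extend $\rho$ and $u$ by zero outside $\Omega$; since $u$ has zero trace on $\partial\Omega$, the extended $(\rho,u)$ still satisfies the continuity equation on $(0,T)\times\eR^d$ in $\mathcal D'$ (see \cite[Lemma~6.8]{NovStr04}).

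For Part~1 I mollify in space by a standard mollifier $S_\varepsilon$ to obtain
\begin{equation*}
\tder S_\varepsilon(\rho)+\dvr(S_\varepsilon(\rho)u)=r_\varepsilon(\rho,u)\quad\text{a.e.\ in }(0,T)\times\eR^d,
\end{equation*}
where by Friedrichs' commutator lemma (see \cite[Lemma~6.7]{NovStr04}) $r_\varepsilon(\rho,u)\to 0$ in $L^r(0,T;L^r_{loc}(\eR^d))$ for any $r<2$. Because $\|S_\varepsilon(\rho)\|_{L^\infty}\leq\|\rho\|_{L^\infty}<\bar\rho$ uniformly in $\varepsilon$, the composition $b(S_\varepsilon(\rho))$ is pointwise well defined, and multiplying the regularized equation by $b'(S_\varepsilon(\rho))$ yields the $\varepsilon$--level renormalized identity. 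Passing $\varepsilon\to 0_+$ uses a.e.\ convergence $S_\varepsilon(\rho)\to\rho$, continuity of $b$ and $b'$, and the monotonicity hypothesis \eqref{BMon}: on $\{\rho\leq\bar\rho-\alpha_0\}$ the maxima of $|b|,|b'|$ over the compact interval $[0,\bar\rho-\alpha_0]$ furnish bounded majorants, while on $\{\rho>\bar\rho-\alpha_0\}$ monotonicity yields $|b^{(j)}(S_\varepsilon(\rho))|\leq|b^{(j)}(\rho)|$, which is integrable by \eqref{BRhoReg}. The Lebesgue dominated convergence theorem closes Part~1.

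Part~2 faces the main obstacle: $b_\alpha$ is only Lipschitz at $s=\bar\rho-\alpha$, so the value of $b_\alpha'(\rho)\rho\,\dvr u$ on $\{\rho=\bar\rho-\alpha\}$ is ambiguous. The strategy is to first establish the auxiliary identity
\begin{equation*}
(\bar\rho-\alpha)\dvr u=0\quad\text{a.e.\ on }\{\rho=\bar\rho-\alpha\}.
\end{equation*}
I propose a one--sided smoothing comparison: pick a $C^1$ function $b$ that equals the identity near $\bar\rho-\alpha$ and is admissible in Part~1, then set $b^\pm_{\alpha,\varepsilon}=S_{\varepsilon/2}(b_{\alpha\pm\varepsilon})$. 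Both are $C^1$ and admissible in Part~1, both converge pointwise to $b_\alpha$, but the limits of their derivatives at $\bar\rho-\alpha$ differ (tending to $0$ from above and to $1$ from below). Applying Part~1 to each, passing to the limit by dominated convergence, and subtracting the two resulting renormalized equations isolates exactly the contribution on $\{\rho=\bar\rho-\alpha\}$, producing the auxiliary identity.

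With the auxiliary identity in hand, the renormalized equation for $b_\alpha$ follows by mollifying $b_\alpha$ directly (after extending it suitably above $\bar\rho$ to a compactly supported continuous function). Then $S_\varepsilon(b_\alpha)$ is $C^\infty$, fulfils the monotonicity \eqref{BMon}, and is admissible in Part~1. Sending $\varepsilon\to 0_+$, the derivatives satisfy $(S_\varepsilon(b_\alpha))'(\rho)\to b_\alpha'(\rho)$ a.e.\ off $\{\rho=\bar\rho-\alpha\}$; on that exceptional set the auxiliary identity forces the contribution $\rho(S_\varepsilon(b_\alpha))'(\rho)\dvr u$ to vanish in the limit, consistent with the convention $b_\alpha'(\bar\rho-\alpha)=0$ adopted in the statement. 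Uniform $L^\infty$ bounds again supply integrable majorants, and dominated convergence completes Part~2. The principal difficulty of the proof is isolated in the two-sided-smoothing argument for the auxiliary identity; once it is in place, the rest follows the template of Part~1.
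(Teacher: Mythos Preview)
Your proposal is correct and follows essentially the same route as the paper: the DiPerna--Lions mollification for Part~1 with the monotonicity-based majorants, the auxiliary identity $(\bar\rho-\alpha)\dvr u=0$ on $\{\rho=\bar\rho-\alpha\}$ obtained via the pair $b^\pm_{\alpha,\varepsilon}=S_{\varepsilon/2}(b_{\alpha\pm\varepsilon})$ and subtraction, and finally the mollification of $b_\alpha$ itself combined with the auxiliary identity to pass to the limit. The argument and its organization coincide with the paper's proof in all essential points.
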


\begin{Lemma}\label{Lem:Bog}
	Let $\Omega\subset\eR^d$ be a starshaped domain with respect to a ball $B$ possessing the radius $R$. There exists a linear operator $\mathcal B: C_c^\infty(\Omega)\rightarrow C_c^\infty(\Omega)^d$ 
	such that $\dvr \mathcal B(f)=f$ provided that $\int_\Omega f=0$.
	Moreover, $\mathcal B$ can be extended in a unique way as a bounded linear operator
	\begin{enumerate}
		\item $\mathcal B: L^p(\Omega)\rightarrow W^{1,p}(\Omega)^d$ such that $\|B(f)\|_{W^{1,p}(\Omega)}\leq c\|f\|_{L^p(\Omega)}$
		\item $\mathcal B: \{f\in (W^{1,p'}(\Omega))':\langle f,1\rangle=0´\}\rightarrow L^p(\Omega)^d$ such that $\|B(f)\|_{L^p(\Omega)}\leq c\|f\|_{(W^{1,p}(\Omega))'}$
	\end{enumerate}  
 for any $p\in(1,\infty)$ where the constants $c$ take the form
 \begin{equation*}
 c=c_0(p,d)\left(\frac{\diam(\Omega)}{R}\right)^d\left(1+\frac{\diam(\Omega)}{R}\right).
 \end{equation*}
\end{Lemma}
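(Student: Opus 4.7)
The plan is to construct $\mathcal B$ explicitly by the classical Bogovskii integral representation, verify the divergence identity by a direct calculation, and then derive the two Sobolev-type bounds from Calderón--Zygmund theory together with a careful scaling analysis to extract the geometric constants. First I fix a nonnegative $\omega\in C_c^\infty(B)$ with $\int_B\omega=1$ and $\|\nabla^k\omega\|_{L^\infty}\leq c(d,k)R^{-d-k}$. For $f\in C_c^\infty(\Omega)$ with $\int_\Omega f=0$, I define
\begin{equation*}
\mathcal B(f)(x)=\int_\Omega K(x,y)f(y)\dy,\qquad K(x,y)=\frac{x-y}{|x-y|^d}\int_{|x-y|}^\infty \omega\!\left(y+r\tfrac{x-y}{|x-y|}\right)\!\left(\tfrac{r}{|x-y|}\right)^{d-1}\!dr.
\end{equation*}
The star-shapedness of $\Omega$ with respect to $B$ guarantees that for every $y\in\Omega$ the ray $y+r(x-y)/|x-y|$ crosses $B$, so $K$ is well-defined and $\mathcal B(f)$ extends to a $C^\infty_c(\Omega)^d$ function. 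The identity $\dvr\mathcal B(f)=f$ in $\Omega$ follows by differentiating under the integral, changing variables via $z=y+r(x-y)/|x-y|$, and using $\int_B\omega=1$ together with $\int_\Omega f=0$ to kill the boundary contribution.

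The heart of the proof is the $L^p\to W^{1,p}$ bound of assertion (1). Differentiating gives a decomposition
\begin{equation*}
\partial_j(\mathcal B(f))_i(x)=\mathrm{p.v.}\!\int_\Omega G_{ij}(x,x-y)f(y)\dy+\int_\Omega H_{ij}(x,y)f(y)\dy,
\end{equation*}
where $G_{ij}(x,z)$ is homogeneous of degree $-d$ in $z$ and has zero spherical mean (this cancellation is exactly what the Bogovskii kernel is engineered to produce), while $H_{ij}$ collects lower-order terms involving $\nabla\omega$. The first term is a Calderón--Zygmund singular integral with a standard kernel, so the $L^p$ bound $\|\cdot\|_p\leq c_0(p,d)\|f\|_p$ follows from the Calderón--Zygmund theorem; the remainder is controlled by Young's inequality using the pointwise bound $|H_{ij}(x,y)|\leq cR^{-d}\mathbf 1_{\Omega\times\Omega}$ combined with $|\Omega|^{1/p}\leq c\,\diam(\Omega)^{d/p}$. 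The $L^p$ bound on $\mathcal B(f)$ itself follows from the gradient bound together with the Poincaré inequality (since $\mathcal B(f)$ vanishes on the boundary in the weak sense, e.g.\ via the density argument used in \eqref{MollBACnv}). Assertion (2) is obtained by duality: one writes the formal adjoint $\mathcal B^\ast:L^{p'}(\Omega)^d\to W^{1,p'}(\Omega)$ via the transposed kernel, runs the same singular-integral argument, and transposes back on the subspace $\{\langle f,1\rangle=0\}$.

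The main obstacle is tracking the explicit geometric dependence $c_0(p,d)(\diam(\Omega)/R)^d(1+\diam(\Omega)/R)$. The cleanest way is to rescale: map $\Omega$ to $\widetilde\Omega=R^{-1}(\Omega-x_0)$, where the reference ball becomes the unit ball, prove the estimate on $\widetilde\Omega$ with a constant depending only on $\diam(\widetilde\Omega)=\diam(\Omega)/R$, and then undo the scaling. On $\widetilde\Omega$ the Calderón--Zygmund constant depends only on $(p,d)$; the $H_{ij}$ contribution picks up a factor $|\widetilde\Omega|^{1/p}\leq c\,\diam(\widetilde\Omega)^{d/p}$, and the Poincaré step contributes one further factor of $\diam(\widetilde\Omega)$. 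Collecting these and rewriting in the original variables yields a constant of the stated form. This geometric bookkeeping is standard (it is carried out in full in Galdi's monograph and in \cite{NovStr04}, whose formulation we follow), so I would cite those references rather than reproducing the calculation.
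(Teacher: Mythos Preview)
The paper does not actually prove this lemma: it is stated in the Appendix as a known result with no proof and no explicit citation attached, though the bibliography contains \cite{NovStr04}, which carries the full argument. So there is no ``paper's own proof'' to compare against; you have supplied a proof where the authors simply quote a standard fact.

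Your sketch is the classical Bogovskii construction and is essentially correct. A couple of small points are worth tightening. First, your appeal to \eqref{MollBACnv} for the boundary behaviour of $\mathcal B(f)$ is misplaced: that display concerns mollification of $b_\alpha(\rho)u$, not the Bogovskii operator; the fact that $\mathcal B(f)\in C_c^\infty(\Omega)^d$ follows directly from the star-shapedness and the support of the kernel, and the $W^{1,p}_0$ membership then persists under density. Second, your scaling bookkeeping does not quite reproduce the stated constant: you obtain factors like $\diam(\widetilde\Omega)^{d/p}$ and $\diam(\widetilde\Omega)$, whereas the lemma asserts the $p$-independent form $(\diam(\Omega)/R)^d(1+\diam(\Omega)/R)$. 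The correct accounting (as in Galdi or \cite{NovStr04}) bounds the Calder\'on--Zygmund piece uniformly and puts all geometric dependence into the size of the kernel's angular support and the $L^1$ norm of $K(x,\cdot)$, which is where the $(\diam/R)^d$ arises; since you already plan to cite those references for this step, this is not a real gap, but your heuristic for the exponent is off.
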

Assertions in the following lemma are based on the results from \cite{KaLa84}
\begin{Lemma}\label{Lem:Eul}
	Let $v_0\in W^{m,2}(\eR^3)$ with $m>4$ be such that $\dvr v_0=0$ in $\eR^3$.
	Then there is $T_{max}>0$ and a classical solution $v$, unique in the class
	\begin{equation*}
	v\in C([0,T_{max}),W^{m,2}(\eR^3)^3),\ \tder v\in C([0,T_{max});W^{m-1,2}(\eR^3)^3)
	\end{equation*}
	to the initial value problem
	\begin{align*}
	\tder v+v\cdot\nabla v+\nabla \Pi=0&\text{ in }(0,T_{max})\times\eR^3,\\
	v(0,\cdot)=v_0, \dvr v_0=0&\text{ in }\eR^3.
	\end{align*}
	Furthermore, the associate pressure $\Pi$ can be expressed as
	\begin{equation*}
	\Pi=(-\Delta)^{-1}\dvr\dvr(v\otimes v).
	\end{equation*}
	implying particularly that $\Pi\in C^1([0,T];C^1(\eR^3)\cap W^{1,2}(\eR^3))$, $T\in (0,T_{max})$.
\end{Lemma}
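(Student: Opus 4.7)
The statement is the standard local existence theorem for the three--dimensional incompressible Euler system in the Sobolev scale above the Lipschitz threshold, together with the classical explicit formula for the pressure. My plan is to follow the abstract quasilinear framework of \cite{KaLa84}, which concretely amounts to the following steps: (i) eliminate the pressure by applying the Helmholtz projector $H$ to the momentum equation, reducing \eqref{IES} to $\tder v+H\left((v\cdot\nabla)v\right)=0$; (ii) regularize by mollification or Galerkin truncation, producing classical approximate solutions $v^\varepsilon\in C^1([0,T^\varepsilon);W^{m,2}(\eR^3)^3)$ of the corresponding divergence--free system; (iii) derive uniform-in-$\varepsilon$ a priori bounds in $W^{m,2}(\eR^3)$; (iv) pass to the limit and upgrade to the continuous class; and (v) reconstruct $\Pi$ from the divergence--free constraint.

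The central a priori estimate is obtained by applying $\partial^\alpha$ with $|\alpha|\leq m$ to the projected equation and testing with $\partial^\alpha v^\varepsilon$. Because $\dvr v^\varepsilon=0$, the would--be top--order term $\int_{\eR^3}(v^\varepsilon\cdot\nabla)\partial^\alpha v^\varepsilon\cdot\partial^\alpha v^\varepsilon$ vanishes, and only commutator contributions remain. The Kato--Ponce commutator inequality then yields
\begin{equation*}
\frac{\dd}{\dd t}\|v^\varepsilon\|_{W^{m,2}(\eR^3)}\leq c\|\nabla v^\varepsilon\|_{L^\infty(\eR^3)}\|v^\varepsilon\|_{W^{m,2}(\eR^3)},
\end{equation*}
and since $m>4$ the Sobolev embedding $W^{m-1,2}(\eR^3)\hookrightarrow L^\infty(\eR^3)$ bounds $\|\nabla v^\varepsilon\|_{L^\infty}$ by $\|v^\varepsilon\|_{W^{m,2}}$, yielding the Riccati--type differential inequality $\tfrac{\dd}{\dd t}\|v^\varepsilon\|_{W^{m,2}}\leq c\|v^\varepsilon\|^2_{W^{m,2}}$. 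Integration provides a common existence time $T_{max}=T_{max}(\|v_0\|_{W^{m,2}})>0$ and a uniform bound $\|v^\varepsilon\|_{L^\infty(0,T;W^{m,2})}\leq C(T)$ for every $T<T_{max}$. Aubin--Lions together with the equation gives strong convergence of a subsequence in $L^2(0,T;W^{m-1,2}_{\operatorname{loc}}(\eR^3))$, enough to pass to the limit in the nonlinearity. Uniqueness in the natural class follows from an $L^2$ energy estimate for the difference of two solutions and Gr\"onwall, using the Lipschitz spatial regularity of both.

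For the pressure, taking the divergence of the momentum equation and using $\dvr v=0$ gives $-\Delta\Pi=\dvr\dvr(v\otimes v)$, equivalently $\Pi=(-\Delta)^{-1}\dvr\dvr(v\otimes v)=R_iR_j(v_iv_j)$ with $R_i$ the Riesz transforms. Since $m>3/2$ makes $W^{m,2}(\eR^3)$ a Banach algebra, $v\otimes v\in C([0,T];W^{m,2}(\eR^3)^{3\times 3})$, and boundedness of the Riesz transforms on every $W^{k,2}$ yields $\Pi\in C([0,T];W^{m,2}(\eR^3))$. The embedding $W^{m,2}\hookrightarrow C^1(\eR^3)$ valid for $m>5/2$ delivers the asserted spatial regularity of $\Pi$, while reading the equation as $\tder v=-(v\cdot\nabla)v-\nabla\Pi$ produces $\tder v\in C([0,T_{max});W^{m-1,2}(\eR^3)^3)$; differentiating the pressure formula once in $t$ then transfers the $C^1$-in-time regularity to $\Pi$.

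The main obstacle will be the upgrade from weak compactness in $L^\infty_tW^{m,2}_x$ to continuity in time in the top norm: weak lower semicontinuity only delivers $L^\infty(0,T;W^{m,2})\cap C_w([0,T];W^{m,2})$, and reaching the claimed $C([0,T_{max});W^{m,2})$ class requires either the Bona--Smith trick (regularize the initial datum, use uniqueness to identify limits, and exploit that $\|v^\varepsilon(t)\|_{W^{m,2}}\to\|v(t)\|_{W^{m,2}}$ combined with weak convergence forces strong Hilbert--space convergence) or the abstract quasilinear semigroup construction of \cite{KaLa84}. Once this step is settled, the remaining assertions on $\Pi$ and $\tder v$ are routine consequences of the equation, elliptic regularity, and Sobolev embedding.
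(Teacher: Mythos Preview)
Your sketch is correct and follows the standard local well-posedness argument for the incompressible Euler system in Sobolev spaces above the Lipschitz threshold. Note, however, that the paper does not actually prove this lemma: it is stated in the Appendix with the remark that the assertions ``are based on the results from \cite{KaLa84}'' and no further argument is given. Your proposal therefore supplies substantially more than the paper does, and what you supply is precisely the content of the cited reference (energy estimates via commutators, Riccati blow-up alternative, compactness, and the Bona--Smith/abstract-semigroup upgrade to strong continuity in the top norm), together with the routine pressure reconstruction through Riesz transforms and the algebra property of $W^{m,2}(\eR^3)$ for $m>3/2$.
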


\section*{Acknowledgment}
{\it \v S. N. and M. K. have been supported by the Czech Science Foundation (GA\v CR) project 22-01591S.  Moreover, \it \v S. N.  and M. K. have been supported by  Praemium Academiæ of \v S. Ne\v casov\' a. The Institute of Mathematics, CAS is supported by RVO:67985840.}

\section*{Conflict of interest}
On behalf of authors, the corresponding author states that there is no  conflict of interest. 

\bibliographystyle{plain}

\end{document}